\theoremstyle{definition}{
\newtheorem{Def}{{\rm Definition}}
\newtheorem{Ex}{{\rm Example}}
\newtheorem{Rem}{{\rm Remark}}

}
\newtheorem{Cor}{Corollary}
\newtheorem{Prop}{Proposition}
\newtheorem{Thm}{Theorem}
\begin{document}
\title[Explicit fold maps whose singular value sets may have double points]{Surgery operations to fold maps to construct fold maps whose restrictions to the singular sets may not be embeddings}
\author{Naoki Kitazawa}
\keywords{Singularities of differentiable maps; generic maps. Differential topology. Reeb spaces. {\it \textup{2020} Mathematics Subject Classification}: Primary~57R45. Secondary~57R19.}
\address{Institute of Mathematics for Industry, Kyushu University, 744 Motooka, Nishi-ku Fukuoka 819-0395, Japan
 TEL (Office): +81-92-802-4402 FAX (Office): +81-92-802-4405}
\email{n-kitazawa@imi.kyushu-u.ac.jp}
\begin{abstract}
Constructing {\it Morse} functions and their higher dimensional versions or {\it fold} maps is
 fundamental, important and challenging in investigating the topologies and the differentiable structures of differentiable manifolds
 via Morse functions, fold maps and more general generic maps. It is one of important and interesting branches of the singularity theory of differentiable maps and applications to geometry of manifolds.

In this paper we present fold maps with information of cohomology rings of their {\it Reeb spaces}. Reeb spaces are defined as the spaces of all connected components of all preimages, and in suitable situations inherit topological information such as homology groups and cohomology rings of the manifolds. Previously, the author demonstrated construction of fold maps in various cases : key methods are surgery operations to manifolds and maps and in this paper, we present more useful surgery operations and by them we construct new fold maps. More precisely, fold maps such that the restrictions to the singular sets may not be embeddings: the {\it singular set} of a smooth map is set of all singular points and note that for a fold map, the singular set is a closed submanifold with no boundary and the restriction to this set of the original map is a smooth immersions of codimension $1$.
\end{abstract}
\maketitle
\section{Introduction and fundamental notation and terminologies.}
\label{sec:1}
As the well-known classical theory of Morse functions shows, investigating the topologies and the differentiable structures of differentiable manifolds
 via {\it Morse} functions, {\it fold} maps and more general generic maps is one of important and interesting branches of the singularity theory of differentiable maps and applications to geometry of manifolds. The present paper is a study on this. More precisely, this paper is on explicit construction on fold maps and homology groups and cohomology rings of {\it Reeb spaces} of these maps. {\it Reeb spaces} are defined as the spaces of all connected components of all preimages, and in suitable situations inherit homology groups, cohomology rings, and so on, of the manifolds, which we will explain later again. Thus investigating Reeb spaces connects to understanding algebraic topological and differential topological properties of the world of manifolds.  

\subsection{Fold maps.}

{\it Fold} maps are smooth maps regarded as higher dimensional versions of Morse functions.

For the explanation of the strict definition and fundamental properties of a {\it fold} map, we explain fundamental terminologies and notation related to {\it singular points} of smooth ($C^{\infty}$) maps. They are used throughout the present paper. Throughout the paper, manifolds are assumed to be smooth (of class $C^{\infty}$) and maps between (smooth) manifolds are assumed to be smooth (of class $C^{\infty}$) unless otherwise stated: for example, in the presentation of a sketch of the proof of Proposition \ref{prop:4} and so on, {\it piecewise smooth} and PL manifolds which may not be smooth appear.

A {\it singular point} of a smooth map $c$ is a point at which the rank of the differential of the map is smaller than the dimension of the target space. A {\it singular value} of the map is a point which is a value at some singular point. The set $S(c)$ of all singular points is the {\it singular set} of the map. The {\it singular value set} is the image $c(S(c))$. The {\it regular value set} of $c$ is the complement of $c(S(c))$ in the target space. A {\it singular} {\rm (}{\it regular}{\rm )} {\it value} of the map is a point in the singular (resp. regular) value set.

A diffeomorphism is always assumed to be smooth and the {\it diffeomorphism group} of a manifold is the group of all diffeomorphisms on the manifold.

\begin{Def}
\label{def:1}
Let $m>n \geq 1$ be integers. A smooth map from an $m$-dimensional smooth manifold with no boundary into an $n$-dimensional smooth manifold with no boundary is said to be a {\it fold} map if at each singular point $p$, the map is represented as
$$(x_1, \cdots, x_m) \mapsto (x_1,\cdots,x_{n-1},\sum_{k=n}^{m-i}{x_k}^2-\sum_{k=m-i+1}^{m}{x_k}^2)$$
 for some suitable coordinates and an integer $0 \leq i(p) \leq \frac{m-n+1}{2}$.
\end{Def}

For a fold map, the following properties are fundamental.
\begin{itemize}
\item For any singular point $p$, the $i(p)$ in Definition \ref{def:1} is unique. 
\item The set consisting of all singular points of a fixed index of the map is a smooth and closed submanifold of the source manifold with no boundary of dimension $n-1$. 
\item The restriction to the singular set of the original map is a smooth immersion.
\end{itemize}
Here, we define the {\it index} of $p$ by $i(p)$.

We will define a {\it crossing} and a {\it normal} crossing of a family of smooth immersions.

Let $a$ be a positive integer and $\{c_j:X_j \rightarrow Y\}_{j=1}^{a}$ be a family of $a$ smooth immersions $c_j$ from $m_j$-dimensional smooth manifolds $X_j$ without boundaries into an $n$-dimensional smooth manifold $Y$ with no boundary.
A {\it crossing} of the family of these smooth immersions is a point $y \in Y$ such that 
${\bigcup}_{j=1}^a {c_j}^{-1}(y)$ has at least two points. A crossing is said to be {\it normal} if the following properties hold.

For a smooth manifold $X$, we denote the tangent bundle by $TX$ and the tangent vector at $p \in X$ by $T_p X \subset TX$.

\begin{enumerate}
\item The disjoint union ${\bigcup}_{j=1}^a {c_j}^{-1}(y)$ of these preimages is a finite set consisting of exactly $b>1$ points.
\item Let $\{p_j\}_{j=1}^{b}$ be the set just before. Let $n_y(j)$ be the number satisfying $p_j \in X_{n_y(j)}$: we can define uniquely. If we consider the intersection ${\bigcap}_{j=1}^{b} {d {c_{n_y(j)}}}_{p_j}(T_{p_j}X_{n_y(j)})$ of the images of the differentials at all points and we denote the dimension of this by $i_y(d)$, then the relation $i_y(d)+{\Sigma}_{j=1}^b (n-m_{n_y(j)})=n$ holds.
\end{enumerate}

We can define a {\it normal crossing} of a single immersion (the case $a=1$).
%For a point in a subset of $Y$, we also call it a {\it crossing} and a {\it normal} crossing if we can find a suitable family $\{c_j:X_j \rightarrow Y\}_{j=1}^{a}$ of $a$ smooth immersions $c_j$ from $m_j$-dimensional smooth manifolds $X_j$ without boundaries into $Y$ so that the union of the images are the given subset and that we can argue similarly.

In the present paper, we only consider crossings such that preimages consist of exactly two points.

A {\it stable} fold map is a fold map 
whose restriction to the singular set is a smooth immersion such that the crossings of the restriction to the singular set of the original fold map are always normal.
 For systematic explanations on {\it stable} (fold) maps, see
 \cite{golubitskyguillemin} for example.
In this paper, we construct stable fold maps such that the restriction to the singular set of the original fold map may have normal crossings.

\subsection{Reeb spaces} 
Reeb spaces are also fundamental and important tools in investigating the topologies of the source manifolds of smooth maps whose codimensions are negative. 

 Let $X$ and $Y$ be topological spaces. For $p_1, p_2 \in X$ and for a continuous map $c:X \rightarrow Y$, 
 we define as $p_1 {\sim}_c p_2$ if and only if $p_1$ and $p_2$ are in
 a same connected component of $c^{-1}(p)$ for some $p \in Y$. 
Thus ${\sim}_{c}$ is an equivalence relation on $X$ and we denote the quotient space $X/{\sim}_c$ by $W_c$.
\begin{Def}
\label{def:2}
We call $W_c$ the {\it Reeb space} of $c$.
\end{Def}

 We denote the induced quotient map from $X$ into $W_c$ by $q_c$. We can define $\bar{c}: W_c \rightarrow Y$ uniquely
 so that the relation $c=\bar{c} \circ q_c$ holds.

\begin{Prop}[\cite{shiota}]
\label{prop:1}
For stable fold maps, the Reeb spaces are polyhedra and the dimensions are equal to the dimensions of the target manifolds.
\end{Prop}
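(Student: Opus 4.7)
The plan is to deduce the polyhedral structure of $W_c$ from the fact that the equivalence relation $\sim_c$ is subanalytic, combined with Shiota's triangulation theorem; the dimension will then follow from a direct count using regular values.

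First I would verify that $\sim_c$ is subanalytic by exploiting the standard normal form in Definition \ref{def:1}. In a local chart around a singular point $p$ of index $i(p)$, the fibres of $c$ are level sets of $\sum_{k=n}^{m-i(p)} x_k^2 - \sum_{k=m-i(p)+1}^{m} x_k^2$ sliced by $x_1 = a_1, \dots, x_{n-1} = a_{n-1}$, so the decomposition of each fibre into connected components is semi-algebraic in these coordinates. Near a normal crossing of $c|_{S(c)}$, the transversality in item (2) of the definition of a normal crossing lets us view the local preimage as a transverse union of finitely many such standard models and keep the local component decomposition semi-algebraic. Away from $S(c)$, $c$ is a submersion and the component structure is locally trivial. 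Gluing these descriptions shows that the graph $\{(p_1,p_2) \in M \times M : p_1 \sim_c p_2\}$ is subanalytic.

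Next I would apply the triangulation theorem cited as \cite{shiota} to produce compatible triangulations of $M$ and $N$ adapted to $c$, to $S(c)$ together with its self-intersections, and to the subanalytic partition of $M$ into connected fibre-components. After a suitable subdivision, $\sim_c$ identifies open simplices of $M$ with open simplices, so $W_c$ inherits a canonical polyhedral structure in which the quotient $q_c$ is simplicial. This triangulation step is the main obstacle of the argument: propagating a single triangulation across normal crossings of $c|_{S(c)}$ so that the fibre-component partition is everywhere a subcomplex is precisely where the full strength of Shiota's subanalytic machinery is used, and is not a formal consequence of the local models alone.

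For the dimension, I would argue as follows. By Sard's theorem the singular value set $c(S(c))$ has dimension at most $n-1$, so regular values are dense in $N$. Over any regular value $y$, the fibre $c^{-1}(y)$ is a closed $(m-n)$-dimensional submanifold of $M$ with at most countably many components, each of which is collapsed to a single point of $W_c$; hence $\bar c : W_c \to N$ has discrete fibres over a dense open subset of $N$, giving $\dim W_c \le n$. Conversely, for any regular value $y_0 \in c(M)$, the map $\bar c$ is a local homeomorphism near each of its preimages in $W_c$ onto a neighbourhood of $y_0$ in $N$, so $\dim W_c \ge n$. Combining, $\dim W_c = n = \dim N$, as claimed.
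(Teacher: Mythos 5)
The paper offers no proof of Proposition \ref{prop:1}; it is quoted from \cite{shiota}, where the essential content is that stable maps are triangulable (Thom's conjecture), after which the Reeb space of a simplicial map is seen to carry a polyhedral structure. Your overall strategy (subanalyticity of $\sim_c$ plus a triangulation theorem, then a separate dimension count) is in the right spirit, but two steps have genuine gaps. The first is the claim that the graph of $\sim_c$ is subanalytic because the local fibre-component decompositions are semi-algebraic and can be ``glued''. Whether $p_1\sim_c p_2$ is a \emph{global} condition on the fibre $c^{-1}(c(p_1))$ --- the existence of a path inside the fibre joining the two points --- and it is not determined by the local normal forms near $p_1$, near $p_2$, or near the crossings of $c|_{S(c)}$. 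To close this you need properness of $c$ together with local triviality of $c$ as a stratified map over a stratification of $N$ compatible with $c(S(c))$ and its normal crossings (Thom's isotopy lemmas, or Hardt's theorem in the subanalytic category); that is precisely the nontrivial machinery you are implicitly borrowing from \cite{shiota}. The route matching the citation is cleaner: first triangulate $c$ itself, and then observe that for a simplicial map the partition of each fibre into connected components is realized on subcomplexes, so $W_c$ is a polyhedron and $q_c$, $\bar c$ are PL.

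The second gap is the upper bound on the dimension. Discreteness of the fibres of $\bar c$ over a dense open subset of $N$ does \emph{not} give $\dim W_c\le n$: a polyhedron can map onto $N$ with singleton fibres over a dense open set while carrying excess dimension over the nowhere dense singular value set (collapse a square to a point of $\mathbb{R}$ and take the disjoint union with the identity map of $\mathbb{R}$). What is needed is that $\bar c$ has finite, hence zero-dimensional, fibres over \emph{every} point of $N$ --- true here because each $c^{-1}(y)$ is compact and locally connected, so has finitely many components --- combined with the Hurewicz dimension-lowering theorem for closed maps, or simply with the PL structure of $\bar c$ obtained in the first step. Your lower bound, via the fact that $\bar c$ is a local homeomorphism over regular values in the image, is correct.
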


For Reeb spaces, see also \cite{reeb} for example.

\subsection{Reeb spaces of fold maps}

We introduce a class of stable fold maps such that the Reeb spaces inherit much information of algebraic topological invariants of the manifolds in Proposition \ref{prop:2}.

A {\it simple} fold map $f$ is a fold map such that the restriction $q_f {\mid}_{S(f)}$ is injective.

{\it PID} means a principal ideal domain throughout the present paper. 

\begin{Prop}[\cite{saekisuzuoka}  (\cite{kitazawa2} and \cite{kitazawa3})]
\label{prop:2}
Let $m$ and $n$ be integers satisfying $m>n \geq 1$. Let $A$ be a commutative group. Let $M$ be a smooth, closed, connected and orientable manifold of dimension $m$ and $N$ be an $n$-dimensional smooth manifold with no boundary.
  
Then, for a simple fold map $f:M \rightarrow N$ such that preimages of regular values are always disjoint unions of standard spheres and that indices of singular points are always $0$ or $1$, the following three hold.
\begin{enumerate}
\item Three induced homomorphisms
 ${q_f}_{\ast}:{\pi}_j(M) \rightarrow {\pi}_j(W_f)$, ${q_f}_{\ast}:H_j(M;A) \rightarrow H_j(W_f;A)$, and ${q_f}^{\ast}:H^j(W_f;A) \rightarrow H^j(M;A)$ are group isomorphisms and the latter two are isomorphisms of modules over $A$ for $0 \leq j \leq m-n-1$. 
\item Let $A$ be a commutative ring. Let $J$ be the set of all integers greater than or equal to $0$ and smaller than or equal to $m-n-1$ and if ${\oplus}_{j \in J} H^{j}(W_f;A)$ and ${\oplus}_{j \in J} H^{j}(M;A)$ are defined as algebras where the sums and the products are canonically induced from the cohomology rings $H^{\ast}(W_f;A)$ and $H^{\ast}(M;A)$ respectively and where the maximal degrees are $m-n-1$, then $q_f$ induces an isomorphism between the algebras
${\oplus}_{j \in J} H^j(W_f;A)$ and ${\oplus}_{j \in J} H^{j}(M;A)$ and this is a restriction of ${q_f}^{\ast}$ to ${\oplus}_{j \in J} H^j(W_f;A)$.
\item Let $A$ be a PID and let $m=2n$ be hold. In this situation, the rank of the module $H_n(M;A)$ is twice the rank of the module $H_n(W_f;A)$ and in addition if $H_{n-1}(W_f;A)$, which is isomorphic to $H_{n-1}(M;A)$, is a free module over $A$, then the first two modules over $A$ are also free.
\end{enumerate} 
\end{Prop}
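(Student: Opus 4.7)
My strategy is to treat $q_f: M \to W_f$ as a map whose fibers are highly connected, and deduce the three statements via a Leray spectral sequence argument together with local analysis of the fold singularities.

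First I would analyze the local structure of $q_f$. Because $f$ is simple, $q_f|_{S(f)}$ is an embedding, so $q_f(S(f))$ is an $(n-1)$-dimensional subcomplex of $W_f$. Away from this subcomplex, the hypothesis that preimages of regular values are standard spheres implies that $q_f$ is a locally trivial $S^{m-n}$-bundle. Using the fold normal forms of Definition~\ref{def:1} together with the simple-fold hypothesis, I would verify that every fiber $q_f^{-1}(w)$ is either a point (over images of index-$0$ singular points), a single $S^{m-n}$, or a wedge $S^{m-n}\vee S^{m-n}$ (over images of index-$1$ singular points); in particular every fiber is $(m-n-1)$-connected and connected.

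With this local picture, assertion (1) follows from the Leray spectral sequence
\[E_2^{p,q} = H^p\bigl(W_f;\, R^q {q_f}_{\ast} A\bigr) \;\Longrightarrow\; H^{p+q}(M;A)\]
of the proper map $q_f$. The connectivity of the fibers gives $R^q {q_f}_{\ast} A = 0$ for $0<q<m-n$, while $R^0 {q_f}_{\ast} A$ is the constant sheaf $A$. Hence only the $q=0$ row contributes in total degree $j\le m-n-1$, which yields the cohomology isomorphism $q_f^{\ast}:H^j(W_f;A)\cong H^j(M;A)$. The homology isomorphism comes from the analogous Leray sequence (or from the Vietoris-Begle theorem applied to $q_f$), and the $\pi_j$-assertion follows from a Hurewicz/Whitehead argument on the mapping cylinder of $q_f$. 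Assertion (2) is then formal: $q_f^{\ast}$ is always a graded ring homomorphism on full cohomology, so its restriction to degrees $\le m-n-1$ is an algebra homomorphism, and by (1) a module isomorphism, hence a ring isomorphism onto the truncated subalgebra.

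For assertion (3), with $m=2n$ the row $q=m-n=n$ of the spectral sequence now contributes. The sheaf $R^n{q_f}_{\ast}A$ is generically the constant sheaf $A$, and orientability of $M$ together with the local model above lets one understand its behavior across $q_f(S(f))$, where its stalks jump between $A$, $0$, and $A^2$. Since $W_f$ has dimension $n$, all potentially nonzero differentials emanating from the $q=n$ row land in groups $H^{\ge n+1}(W_f; \cdot)$ which vanish, so the spectral sequence degenerates in total degree $n$ to an extension relating $H^n(W_f;A)$, the global sections of $R^n{q_f}_{\ast}A$, and $H^n(M;A)$. Combining this with the rank identities from (1), Poincar\'e duality on the closed orientable manifold $M$, and a compactly-supported Euler-characteristic count, I would extract the identity $\mathrm{rk}\,H_n(M;A) = 2\,\mathrm{rk}\,H_n(W_f;A)$. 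The freeness assertion then follows from universal coefficients once $H_{n-1}(W_f;A)\cong H_{n-1}(M;A)$ is free by hypothesis. The main obstacle will be controlling the sheaf $R^n{q_f}_{\ast}A$ along $q_f(S(f))$ and verifying that its global sections contribute exactly the predicted rank; this requires a careful Mayer--Vietoris argument glued from the three types of local fiber models above.
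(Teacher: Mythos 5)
The paper itself contains no proof of Proposition \ref{prop:2}: it is quoted from \cite{saekisuzuoka}, \cite{kitazawa2} and \cite{kitazawa3}. The intended method is nevertheless visible in the sketch of the proof of Proposition \ref{prop:5}: one constructs an $(m+1)$-dimensional compact (PL) manifold $W$ bounded by $M$ which collapses to $W_f$ and which is obtained from $M\times[0,1]$ by attaching handles of index at least $m-n$. Then the pair $(W,M)$ is $(m-n-1)$-connected, which yields all of assertion (1) (including the $\pi_j$ statement) in one stroke; (2) is formal; and (3) comes from playing the handle decomposition from the $M$ side against the collapse of $W$ onto the $n$-dimensional polyhedron $W_f$. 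Your route through the Leray spectral sequence of $q_f$ is genuinely different, and it does work for the cohomological half of (1) and for (2): the point-inverses of $q_f$ are indeed a point, $S^{m-n}$, or $S^{m-n}\vee S^{m-n}$, hence $(m-n-1)$-connected, so Vietoris--Begle makes ${q_f}^{\ast}$ an isomorphism in degrees at most $m-n-1$, and the ring statement is then formal.

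Two steps, however, are genuine gaps. First, the $\pi_j$ part of (1) does not follow from ``a Hurewicz/Whitehead argument on the mapping cylinder'': neither $M$ nor $W_f$ is assumed simply connected, so homology isomorphisms cannot be upgraded to homotopy isomorphisms this way. You would need either Smale's Vietoris mapping theorem for homotopy (a proper surjection with $(m-n-1)$-connected, locally nice point-inverses is $(m-n)$-connected) or the geometric construction of $W$ above, which gives the $\pi_j$ isomorphisms directly from the indices of the attached handles. Second, in (3) the whole content of the identity $\operatorname{rank} H_n(M;A)=2\operatorname{rank} H_n(W_f;A)$ is, in your setup, the equality $\operatorname{rank} H^0\bigl(W_f;R^n{q_f}_{\ast}A\bigr)=\operatorname{rank} H^n(W_f;A)$ extracted from the exact sequence $0\to H^n(W_f;A)\to H^n(M;A)\to H^0\bigl(W_f;R^n{q_f}_{\ast}A\bigr)\to 0$; you defer exactly this to an unspecified Mayer--Vietoris argument. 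This is not a routine verification: it is a duality statement about a sheaf whose stalks jump between $A$, $0$ and $A\oplus A$ along the $(n-1)$-complex $q_f(S(f))$, and it is precisely what the cobordism $W$ (or the explicit chain-level analysis of singular fibers in \cite{saekisuzuoka}) is designed to supply. As written, part (3) is asserted rather than proved, and the freeness claim for $H_n(W_f;A)$ also needs the injectivity of $H^n(W_f;A)\to H^n(M;A)$ plus the universal coefficient theorem spelled out rather than invoked in passing.
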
 
\begin{Rem}
Proposition \ref{prop:2} holds in cases where preimages of regular values may contain homotopy spheres obtained by gluing two copies of a standard closed disc by a diffeomorphism between their boundaries as connected components. The class of such homotopy spheres accounts for the class of all homotopy spheres except $4$-dimensional homotopy spheres which are not standard spheres (such manifolds are still undiscovered). However, we only handle cases where the preimages are disjoint unions of standard spheres.
\end{Rem}
\subsection{Explicit fold maps and their Reeb spaces}
It is fundamental and important to construct explicit fold maps in applying geometric theory of Morse functions and fold maps to understanding of geometric properties of manifolds. However, even on (families of) manifolds which are not so complicated, it is difficult. We present known examples here.

For a topological space $X$, an {\it X-bundle} is a bundle whose fiber is $X$. For a smooth manifold $X$, a {\it smooth} $X$-bundle is an $X$-bundle whose structure group is (a subgroup of) the diffeomorphism group.
\begin{Ex}
\label{ex:1}
\begin{enumerate}
\item
\label{ex:1.1}
The class of {\it special generic} maps is a proper subclass of that of simple fold maps in Proposition \ref{prop:2}.
A {\it special generic} map is a fold map such that the index of each singular point is $0$.
Standard spheres admit special generic maps into arbitrary Euclidean spaces whose dimensions are smaller than or equal to the dimensions of the spheres: canonical projections of unit spheres are simplest ones. In \cite{saeki}, \cite{saeki2}, \cite{saekisakuma}, \cite{wrazidlo} and so on, homotopy spheres which are not diffeomorphic to standard spheres do not admit special generic maps into sufficiently high dimensional Euclidean spaces under the constraint that the dimensions of the Euclidean spaces are assumed to be smaller than those of the homotopy spheres. Moreover, as another fundamental and important property of a special generic map, the maximal degree $j=m-n-1$ in Proposition \ref{prop:2} can be replaced by $j=m-n$.

Last, the Reeb space of a (stable) special generic map $f$ from a closed and connected manifold of dimension $m$
 into ${\mathbb{R}}^n$ satisfying the relation $m>n \geq 1$ is homeomorphic to and regarded as an $n$-dimensional compact and connected manifold we can immerse into ${\mathbb{R}}^n$. The image is regarded as the image of a suitable immersion of the $n$-dimensional manifold. The boundary of the Reeb space and the image $q_f(S(f))$ of the singular set agree. 

Conversely, for integers $m>n \geq 1$ and an $n$-dimensional compact manifold we can immerse
 into ${\mathbb{R}}^n$, we can construct a (stable) special generic map from a suitable closed and connected manifold of dimension $m$ into ${\mathbb{R}}^n$ whose Reeb space is diffeomorphic to the $n$-dimensional manifold.

Moreover, we have the following smooth or linear bundles for a general special generic map $f$ from a closed and connected manifold of dimension $m$ into ${\mathbb{R}}^n$. 
\begin{enumerate}
\item If we restrict the map $q_f$ to the preimage of the interior of the Reeb space, then it gives a smooth $S^{m-n}$-bundle over the interior of the Reeb space.
\item If we restrict the map $q_f$ to the preimage of a small collar neighborhood of the boundary of the Reeb space and consider the composition of this with a canonical projection onto the boundary, then it gives a linear $S^{m-n+1}$-bundle over the boundary.
\end{enumerate}
For integers $m>n \geq 1$ and an $n$-dimensional compact manifold we can immerse into ${\mathbb{R}}^n$, we can construct a (stable) special generic map from a suitable closed and connected manifold of dimension $m$
 into ${\mathbb{R}}^n$ whose Reeb space is diffeomorphic to the $n$-dimensional manifold and these bundles are trivial.

These facts can be seen in \cite{saeki}, in the articles \cite{kitazawa5} and \cite{kitazawa6} by the author, and so on.
\item
\label{ex:1.2}
(\cite{kitazawa}, \cite{kitazawa2} and \cite{kitazawa4}.)
Let $l>0$ be an integer. Let $m>n \geq 1$ be integers. We can construct a stable fold map on a manifold represented as an $l-1$ connected sum of total spaces of smooth $S^{m-n}$-bundles over $S^n$ into ${\mathbb{R}}^n$ (a standard sphere if $l=1$) satisfying the following properties.
\begin{enumerate}
\item The singular value set is ${\sqcup}_{j=1}^{l} \{||x||=j \mid x \in {\mathbb{R}}^n\}$.
\item Preimages of regular values are disjoint unions of standard spheres.
\item In the target space ${\mathbb{R}}^n$, the number of connected components of an preimage increases as we go in the direction of the origin $0 \in {\mathbb{R}}^n$ of the target Euclidean space from a point in the complement of the image. 
\end{enumerate}
The Reeb space has the simple homotopy type of a bouquet of $l-1$ copies of a sphere of dimension $n$ for $l>1$ and that of a point for $l=1$.
\item
\label{ex:1.3}
 \cite{kitazawa5}, \cite{kitazawa6} and so on, are on construction of stable fold maps such that the restrictions to the singular sets are embeddings satisfying the assumption of Proposition \ref{prop:2}. The maps are obtained by finite iterations of surgery operations ({\it bubbling operations}) starting from fundamental fold maps: this operation is introduced by the author in \cite{kitazawa5} respecting ideas of \cite{kobayashisaeki} for example. More precisely, starting from stable special generic maps mainly, by changing maps and manifolds by bearing new connected components of singular (value) sets one after another, we obtain desired maps.

\end{enumerate}
\end{Ex}
\subsection{Construction of explicit fold maps such that the restrictions to the singular sets may have crossings by new surgery operations (bubbling operations) and the organization of this paper}
In this paper, we present further studies on construction in Example \ref{ex:1} (\ref{ex:1.3}). We introduce and use improved versions of bubbling operations.
The organization of the paper is as the following.
In the next section, we introduce a {\it bubbling operation} first introduced in \cite{kitazawa5} by extending the original definition to construct fold maps such that the restrictions to singular sets may have crossings. Defining this extended operation is also a new work in the present paper. 
The last section is devoted to explanations of simple and important examples and main results. We present construction of new families of explicit fold maps via operations before and investigate the cohomology rings of the Reeb spaces. We also explain properties on the cohomology rings which fold maps obtained previously in \cite{kitazawa5} and \cite{kitazawa6} do not satisfy. Proposition \ref{prop:2} and so on are key tools in knowing the cohomology rings of the manifolds from Reeb spaces in suitable cases and we introduce an extended version of this last as Proposition \ref{prop:5}. The proof is done based on the proofs of the related known results,

\section{Bubbling operations and fold maps such that preimages of regular values are disjoint unions of spheres.}
\label{sec:2}
\subsection{Definitions and fundamental properties of Reeb spaces and bubbling operations.}
Throughout this paper, let $m>n \geq 1$ be integers, $M$ be a smooth, closed and connected manifold of dimension $m$, $N$ be a smooth manifold of dimension $n$ with no boundary, and $f:M \rightarrow N$ be a smooth map unless otherwise stated.

In addition, the structure groups of bundles such that their base spaces and fibers are manifolds are assumed to be
 (subgroups of) diffeomorphism groups except some cases in the presentation of a sketch of the proof of Proposition \ref{prop:4} and so on: the bundles are smooth in a word except several cases. A {\it linear} bundle is a smooth bundle whose fiber is a $k$-dimensional unit disc (standard closed disc of a fixed diameter) or the $k$-dimensional unit sphere in ${\mathbb{R}}^{k+1}$ and whose structure groups are subgroups of the $k$-dimensional orthogonal group $O(k)$ and the ($k+1$)-dimensional one $O(k+1)$ acting canonically and linearly, respectively. 
 
% Note that $P$ is regarded as the Reeb space of a %special generic map on the manifold.  
We introduce {\it bubbling operations}, first introduced in \cite{kitazawa5}, referring to the article. We revise some notions and terminologies from the original definition and define an extended version.
\begin{Def}
\label{def:3}
For a stable fold map $f:M \rightarrow N$, let $P$ be a connected
 component of $(W_f-q_f(S(f))) \bigcap {\bar{f}}^{-1}(N-f(S(f)))$, which we may regard as an open manifold diffeomorphic to
 an open manifold $\bar{f}(P)$ in $N$. 
Let $l>0$ and $l^{\prime} \geq 0$ be integers. Assume that there exist families $\{S_j\}_{j=1}^{l}$ and $\{N(S_j)\}_{j=1}^{l}$ of finitely many closed and connected manifolds and total spaces of linear bundles over these manifolds whose fibers are unit discs. 
We also denote by $S_j$ the image of the section obtained by taking the origin for each fiber diffeomorphic to a unit disc for each $N(S_j)$. Assume that the dimensions of $N(S_j)$ are all $n$. Assume also that immersions $c_j:N(S_j) \rightarrow P$ satisfying the following properties exist.
\begin{enumerate}
\item Crossings of the family $\{{c_j} {\mid}_{\partial N(S_j)}:\partial N(S_j) \rightarrow P\}_{j=1}^l$ are normal and ${\bigcup}_{j=1}^l {{c_j} {\mid}_{\partial N(S_j)}}^{-1}(p)$ consists of at most two points for each $p \in P$.
\item Crossings of $\{{c_j} {\mid}_{S_j}:S_j \rightarrow P\}_{j=1}^l$ are normal and ${\bigcup}_{j=1}^l {{c_j} {\mid}_{S_j}}^{-1}(p)$ consists of at most two points for each $p \in P$. 
\item The set of all the crossings of the family $\{{c_j} {\mid}_{S_j}:S_j \rightarrow P\}_{j=1}^l$ of the immersions is a finite set.
\item Let $\{p_{j^{\prime}}\}_{j^{\prime}=1}^{l^{\prime}}$ be the set of all the crossings of the family $\{{c_j} {\mid}_{S_j}:S_j \rightarrow P\}_{j=1}^l$ of the immersions. For each $p_{j^{\prime}}$, there exist one or two integers $1 \leq a(j^{\prime}),b(j^{\prime}) \leq l$ and small standard closed
 discs $D_{2j^{\prime}-1} \subset  S_{a(j^{\prime})}$ and $D_{2j^{\prime}} \subset S_{b(j^{\prime})}$ satisfying the following properties.
\begin{enumerate}
\item (The dimensions) $\dim D_{2j^{\prime}-1}=\dim S_{a(j^{\prime})}$ and $\dim D_{2j^{\prime}}=\dim S_{b(j^{\prime})}$.
\item (The locations of $p_{j^{\prime}}$) $p_{j^{\prime}}$ is in the images of the immersions: $p_{j^{\prime}} \in c_{a(j^{\prime})}({\rm Int }D_{2j^{\prime}-1})$ and $p_{j^{\prime}} \in c_{b(j^{\prime})}({\rm Int }D_{2j^{\prime}})$.
\item If $a(j^{\prime})=b(j^{\prime})$, then $D_{2j^{\prime}-1}$ and $D_{2j^{\prime}}$ do not intersect.
\item If we restrict the bundle $N(S_{a(j^{\prime})})$ over $S_{a(j^{\prime})}$ to $D_{2j^{\prime}-1}$, restrict the bundle $N(S_{b(j^{\prime})})$ over $S_{b(j^{\prime})}$ to $D_{2j^{\prime}}$, and consider the images of the total spaces by $c_{a(j^{\prime})}$ and $c_{b(j^{\prime})}$, which are regarded as manifolds with corners, then they agree as subsets in ${\mathbb{R}}^n$. The restrictions to these spaces are embeddings.
\item The set of all the crossings of the family of $\{{c_j} {\mid}_{S_j}:S_j \rightarrow P\}_{j=1}^l$ is the disjoint union of the $l^{\prime}$ corners of the subsets just before each of which corresponds to $1 \leq j^{\prime} \leq l^{\prime}$. 
\end{enumerate}
\end{enumerate}
We call the family $\{(S_j,N(S_j),c_j:N(S_j) \rightarrow P)\}_{j=1}^{l}$ a {\it normal system of submanifolds} compatible with $f$. 
\end{Def}

\begin{figure}
\includegraphics[width=25mm]{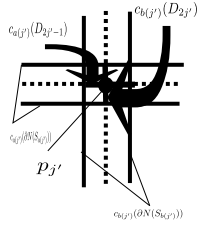}
\caption{Images of immersions around $p_{j^{\prime}}$ with $\dim D_{2j^{\prime}-1}=\dim D_{2j^{\prime}}=1$ (dotted lines are for the images of $S_{a(j^{\prime})}$ and $S_{b(j^{\prime})}$ and thick lines are for the images
 of the boundaries $\partial N(S_{a(j^{\prime})})$ and $N(S_{b(j^{\prime})})$: thick lines are also for the singular value set of a fold map $f^{\prime}$ in Definition \ref{def:5} (or \ref{def:6}) later).}
\label{fig:1}
\end{figure}
In the situation of Definition \ref{def:3}, let $\{N^{\prime}(S_j) \subset N(S_j)\}_{j=1}^{l}$ be the family of total spaces of subbundles of $\{N(S_j)\}_{j=1}^{l}$ over the manifolds whose fibers are standard closed discs and let the diameters of the fibers be all $0<r<1$. For a suitable $r$, same properties as presented in Definition \ref{def:3} hold. In other words, we
 can obtain a family $\{(S_j,N^{\prime}(S_j),{c_j} {\mid}_{N^{\prime}(S_j)}:N^{\prime}(S_j) \rightarrow P)\}_{j=1}^l$ and this is also regarded as a normal system of submanifolds compatible with $f$, by identifying each fiber, which is a standard closed disc of diameter $r$ with a unit disc via the diffeomorphism defined by $t \mapsto \frac{1}{r} t$.

\begin{Def}
\label{def:4}
The family $\{(S_j,N(S_j),c_j:N(S_j) \rightarrow P)\}_{j=1}^{l}$ is said to be a {\it wider normal system supporting} the normal system of submanifolds $\{(S_j,N^{\prime}(S_j),{c_j} {\mid}_{N^{\prime}(S_j)}:N^{\prime}(S_j) \rightarrow P)\}_{j=1}^{l}$ compatible with $f$.
\end{Def}
We have the following immediately.

\begin{Def}
\label{def:5}
For a stable fold map $f:M \rightarrow N$ and an integer $l>0$, let $P$ be a connected
 component of $(W_f-q_f(S(f))) \bigcap {\bar{f}}^{-1}(N-f(S(f)))$ and let $\{(S_j,N(S_j),c_j:N(S_j) \rightarrow P)\}_{j=1}^{l}$ be a normal system of submanifolds compatible with $f$.
Let $\{(S_j,N^{\prime}(S_j),{c_j}^{\prime}:N^{\prime}(S_j) \rightarrow P)\}_{j=1}^{l}$ be a wider normal system supporting this.
Assume that we can construct a stable fold map $f^{\prime}$ on an $m$-dimensional closed manifold $M^{\prime}$ into ${\mathbb{R}}^n$ satisfying the following properties.
\begin{enumerate}
\item $Q$ is the preimage $f^{-1}({\bigcup}_{j=1}^l {c_j}^{\prime}(N^{\prime}(S_j)))$. 
\item $M-{\rm Int} Q$ is realized as a compact submanifold of $M^{\prime}$ of dimension $m$ by considering a suitable smooth embedding $e:M-{\rm Int} Q \rightarrow M^{\prime}$.
\item $f {\mid}_{M-{\rm Int} Q}={f }^{\prime} \circ e {\mid}_{M-{\rm Int} Q}$ holds.
\item ${f}^{\prime}(S({f}^{\prime}))$ is the disjoint union of $f(S(f))$ and ${\bigcup}_{j=1}^n c_j(\partial N(S_j))$.

\end{enumerate}
This enables us to define a procedure of constructing $f^{\prime}$ from $f$. We call it a {\it normal bubbling operation} to $f$. Furthermore, the union ${\bigcup}_{j=1}^l c_j(S_j)$ of the images of the immersions and the family of the images $c_j(S_j)$ for all $c_j$ are called the {\it generating normal systems} of the normal bubbling operation: we call each manifold $S_j$ a {\it generating} manifold of the operation.
\end{Def} 

Ideas for the operations originate from some of \cite{kobayashi}, \cite{kobayashi2} and \cite{kobayashisaeki}. Especially, 
\cite{kobayashi} and \cite{kobayashi2} are on {\it bubbling surgeries} introduced by Kobayashi: a {\it bubbling surgery} is the case where the generating normal system consists of exactly one point.  

In the present paper, we consider normal bubbling operations whose generating manifolds are standard spheres.
Moreover, essentially we consider only stable fold maps such that preimages of regular values are disjoint unions of almost-spheres (standard spheres) and that indices of singular points are $0$ or $1$. Proposition \ref{prop:2} is for simple fold maps satisfying this. Moreover, the codimensions of fold maps are larger than $1$ unless otherwise stated.

More precisely, we consider normal bubbling operations in situations satisfying the following properties.
\begin{Def}
\label{def:6}
For a stable fold map $f:M \rightarrow N$ on an $m$-dimensional closed and connected manifold into an $n$-dimensional manifold with no boundary satisfying $m-n>1$ and an integer $l>0$, let $P$ be a connected
 component of $(W_f-q_f(S(f))) \bigcap {\bar{f}}^{-1}(N-f(S(f)))$ and let $\{(S_j,N(S_j),c_j:N(S_j) \rightarrow P)\}_{j=1}^{l}$ be a normal system of submanifolds compatible with $f$ such that each $S_j$ is a point or a standard sphere. Suppose that we can demonstrate a normal bubbling operation to $f$ to obtain $f^{\prime}$ whose generating normal system is $\{c_j(S_j)\}$ satisfying the following properties.
\begin{enumerate}
\item The union ${\bigcup}_{j=1}^l c_j(N(S_j))$ of the images of the immersions is in an open set $U \subset P$ such that $f {\mid}_{f^{-1}(U)}:f^{-1}(U) \rightarrow U$ is a trivial bundle whose fiber is a standard sphere.
\item The indices of points in the preimage of new connected components in the resulting singular value set are all $1$.
\item For each regular value $p \in U$ of the resulting map, the preimage is a disjoint union of standard spheres.  
\end{enumerate}
We say that this operation is an {\it admissible trivial operation with standard spheres} or that this operation is {\it ATSS}. 
\end{Def}
FIGURE \ref{fig:2} shows for a case (where) some generating manifolds are circles and $n=2$ using a same figure as that of FIGURE \ref{fig:1} or Definition \ref{def:3}. Numbers indicate the numbers of connected components of the preimages of regular values in the corresponding places. 

\begin{figure}
\includegraphics[width=30mm]{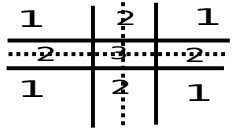}
\caption{The numbers of connected components of the preimages of regular values close to $p_{j^{\prime}}$ in Definition \ref{def:3} or FIGURE \ref{fig:1} for the resulting map $f^{\prime}$.}
\label{fig:2}
\end{figure}

We give an explanation of an explicit local fold map around $p_{j^{\prime}}$.
% respecting the FIGURE \ref{fig:2} (or \ref{fig:1}).

First in Example \ref{ex:1} (\ref{ex:1.2}), we explain a fold map for $l=1$. We construct a product bundle $D^n \times S^{m-n}$ over $D^n$. We also set a Morse function ${\tilde{f}}_{m-n,[a,b]}$ on $S^{m-n} \times [-1,1]$ onto $[a,b] \subset (0,+\infty) \subset \mathbb{R}$ such that the preimage of the minimum is the boundary, that there exist exactly two singular points, and that singular points are in the interior. We glue the projection of the product bundle and the map
 ${\tilde{f}}_{m-n,[a,b]} \times {\rm id}_{S^{n-1}}:[a,+\infty) \times S^{n-1}$ where we identify the base space $D^n$ of the product bundle with a standard closed disc of dimension $n$ whose diameter is $a$ (note that $S^0$ is a two point set with the discrete topology). 
By gluing suitably, we have a fold map from $S^n \times S^{m-n}$ onto the standard closed disc of dimension $n$ whose center is the origin and diameter is $b$ in ${\mathbb{R}}^n$. This is a desired map. More precisely, see \cite{kitazawa} and \cite{kitazawa4}. 

In this construction, we replace ${\tilde{f}}_{m-n,[a,b]}$ by ${\tilde{f}}_{m-n,[a,b^{\prime})}:={\tilde{f}}_{m-n,[a,b]} {\mid}_{{{{\tilde{f}}_{m-n,[a,b]}}}^{-1}([a,b^{\prime}])}$ where $b^{\prime}<b$ is sufficiently close to $b$. We denote the resulting map onto the standard closed disc of dimension $n$ whose center is the origin and diameter is $b^{\prime}$ in ${\mathbb{R}}^n$ by ${\tilde{f}}_{m,n,b^{\prime}}$. 

We consider the composition of ${\tilde{f}}_{m-n+\dim D_{2j^{\prime}-1},\dim D_{2j^{\prime}-1},r}$ for $r>0$ with a suitable diffeomorphism and thus we have a smooth map onto a sufficiently small standard closed disc ${D^{\prime}}_{2j^{\prime}-1} \supset D_{2j^{\prime}-1}$ of dimension $\dim D_{2j^{\prime}-1}$ satisfying $S_{a(j^{\prime})} \supset {D^{\prime}}_{2j^{\prime}-1} \supset {\rm Int} {D^{\prime}}_{2j^{\prime}-1} \supset D_{2j^{\prime}-1}$. We can take a sufficiently small standard closed disc ${D^{\prime}}_{2j^{\prime}} \supset D_{2j^{\prime}}$ of dimension $\dim D_{2j^{\prime}}$ satisfying $S_{b(j^{\prime})} \supset {D^{\prime}}_{2j^{\prime}} \supset {\rm Int} {D^{\prime}}_{2j^{\prime}} \supset D_{2j^{\prime}}$ and consider the product map of the previous smooth map and the identity map ${\rm id}_{{D^{\prime}}_{2j^{\prime}}}$. We compose the resulting map with a suitable diffeomorphism respecting FIGURE \ref{fig:2} (or \ref{fig:1}). More precisely, in this context, we construct the map so that the target space is regarded as $c_{a(j^{\prime})}({D^{\prime}}_{2j^{\prime}-1}) \times c_{b(j^{\prime})}({D^{\prime}}_{2j^{\prime}})$ including $p_{j^{\prime}}$ in the interior: we can say that we compose the product map of the immersions $c_{a(j^{\prime})}$ and $c_{b(j^{\prime})}$. 

Last, we change the map into a desired map so that the resulting singular value set is as presented and respects FIGURE \ref{fig:2} (or \ref{fig:1}). We can restrict the map to a total space of a trivial $D^{m-n}$-bundle over the target space, identified with $c_{a(j^{\prime})}({D^{\prime}}_{2j^{\prime}-1}) \times c_{b(j^{\prime})}({D^{\prime}}_{2j^{\prime}})$. For the composition of ${\tilde{f}}_{m-n+\dim D_{2j^{\prime}},\dim D_{2j^{\prime}},r^{\prime}}$ for $r^{\prime}>0$ with a suitable diffeomorphism, we can restrict the map to a total space of a trivial $D^{m-n}$-bundle over the target space, diffeomorphic to a standard closed disc of dimension $\dim D_{2j^{\prime}}$. The total space is regarded as a submanifold of the domain of the original map and we restrict the composition of ${\tilde{f}}_{m-n+\dim D_{2j^{\prime}},\dim D_{2j^{\prime}},r^{\prime}}$ for $r^{\prime}>0$ with the suitable diffeomorphism to the complement of the submanifold in the domain, which is also a compact submanifold of dimension $m-n+\dim D_{2j^{\prime}}$ of the domain. We consider (the composition of) the product map of this restriction and the identity map ${\rm id}_{{D^{\prime}}_{2j^{\prime}-1}}$ (with a suitable diffeomorphism). We replace the original projection of the trivial $D^{m-n}$-bundle over the target space, identified with $c_{a(j^{\prime})}({D^{\prime}}_{2j^{\prime}-1}) \times c_{b(j^{\prime})}({D^{\prime}}_{2j^{\prime}})$, containing $p_{j^{\prime}}$ in the interior, by this, in a suitable way. We have a desired map.

The map is said to be a {\it local canonical fold map around a crossing} for an ATSS operation. 

We can see the following corollary easily and we use this implicitly in various scenes of the present paper. 

\begin{Cor}
\label{cor:1}
Let $f$ be a stable fold map $f:M \rightarrow N$ from an $m$-dimensional closed and connected manifold into an $n$-dimensional manifold with no boundary satisfying $m-n>1$. If an ATSS operation is demonstrateed to $f$ and a new map $f^{\prime}$ is obtained, then $W_f$ is a proper subset of $W_{{f}^{\prime}}$ such that for the map $\bar{{f}^{\prime}}:W_{f^{\prime}} \rightarrow N$, the restriction to $W_f$ is $\bar{f}:W_f \rightarrow N$.
\end{Cor}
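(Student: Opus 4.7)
The plan is to build the inclusion $W_f \hookrightarrow W_{f'}$ by transporting the quotient relation through the embedding $e:M-\operatorname{Int}Q\to M'$ provided by Definition \ref{def:5} and then to verify the three assertions of the corollary: that the inclusion is well defined, that it is proper, and that it intertwines the two maps $\bar f$ and $\bar{f'}$ into $N$.

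First I would set up the picture. Write $M'=e(M-\operatorname{Int}Q)\cup Q'$, where $Q':={f'}^{-1}(\bigcup_{j=1}^l c_j'(N'(S_j)))$ is the portion created by the operation, and note that by Definition \ref{def:5} (3) we have $f=f'\circ e$ on $M-\operatorname{Int}Q$ and $f'(\partial Q')=f(\partial Q)\subset \bar f(P)$. Using the ATSS hypothesis (Definition \ref{def:6} (1)) the map $f{\mid}_{f^{-1}(U)}$ is a trivial $S^{m-n}$-bundle over $U$, so $Q$ is a disjoint union of copies of $S^{m-n}\times N'(S_j)$ (fibered trivially over the $N'(S_j)$), and the same trivial-bundle structure persists on the collar $\partial Q\subset M-\operatorname{Int}Q$ along which $Q'$ is glued to $e(M-\operatorname{Int}Q)$.

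Next I would define $\iota:W_f\to W_{f'}$ by sending $q_f(x)$ for $x\in M-\operatorname{Int}Q$ to $q_{f'}(e(x))$, and $q_f(x)$ for $x\in\operatorname{Int}Q$ to $q_{f'}(y)$ for any $y$ in the connected component of $f'^{-1}(\bar f(q_f(x)))$ corresponding to $q_f(x)$. For this to be well defined, I must check: (a) two points of $M-\operatorname{Int}Q$ with the same $q_f$-class have the same $q_{f'}$-class, and (b) conversely, two points of $M-\operatorname{Int}Q$ with different $q_f$-classes remain in different $q_{f'}$-classes after passing through $e$. Claim (a) is immediate because a path in a preimage-component of $f$ lies in a preimage-component of $f'$ once we identify via $e$. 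Claim (b) is the substantive point: two components of $f^{-1}(y)$ for $y\in N-f(S(f))$ are merged in $M'$ only if one can travel between them through $Q'$; but ATSS (3) guarantees that preimages of regular values in $U$ are still disjoint unions of standard spheres, so the connected-component count over $\bar f(P)\setminus f'(S(f'))$ is controlled entirely by the new singular value locus, which by Definition \ref{def:5} (4) is exactly $\bigcup_j c_j(\partial N(S_j))$. A componentwise analysis using the local canonical model at crossings described just before the corollary shows each preexisting component of $f^{-1}(y)$ extends through the gluing to a unique component of $f'^{-1}(y)$, so no merging occurs and $\iota$ is injective.

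Then $\bar{f'}\circ\iota=\bar f$ by construction, since on $M-\operatorname{Int}Q$ we have $f=f'\circ e$, and on the interior of $Q$ the map $\bar f$ is determined by $f$ on the corresponding component. For properness of the inclusion, observe that Definition \ref{def:5} (4) introduces new components in $f'(S(f'))$, namely $\bigcup_j c_j(\partial N(S_j))$, which were regular values of $f$; their preimages under $q_{f'}$ produce classes in $W_{f'}$ not lying in the image of $\iota$ (concretely, points of $q_{f'}(S(f'))$ coming from the new index-$1$ singularities, cf.\ Definition \ref{def:6} (2)). The main obstacle I anticipate is verifying (b), i.e.\ that the bubbling does not secretly identify previously distinct components of preimages; this is where the ATSS triviality condition and the explicit local canonical fold map around a crossing are essential, and a careful inspection of FIGURE \ref{fig:2} (counting connected components locally on each side of the new singular value set) is what closes the argument.
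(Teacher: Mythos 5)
The paper offers no proof of Corollary \ref{cor:1} at all (it is stated with ``we can see the following corollary easily''), so the only available comparison is with the picture the paper uses implicitly, namely in the proof of Proposition \ref{prop:3}, where $W_{f'}$ is described as $W_f$ with a polyhedron $A$ attached along $B=\bar{f}^{-1}(\bigcup_j c_j(N(S_j)))\subset W_f$. Your argument is exactly a fleshed-out version of that picture: the inclusion via $e$ on $q_f(M-\mathrm{Int}\,Q)$, the identification of $q_f(\mathrm{Int}\,Q)$ with the distinguished ``base-sheet'' component of the new preimages (which exists because the trivial $S^{m-n}$-bundle over $U$ extends across the surgered region, as the local canonical fold map shows), injectivity from the fact that merging could only occur through $Q'$, and properness from the new index-$1$ singular points. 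This is correct and is essentially the same approach the paper intends; the only point I would make explicit is that your map $\iota$ is a homeomorphism onto its image (not merely a continuous injection), which follows from the same local analysis since $Q$ and $Q'$ are compact and the gluing along $\partial Q$ respects the quotient topologies.
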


\section{Examples, construction of new families of stable fold maps (and their Reeb spaces) and a relation between the cohomology rings of their Reeb spaces and manifolds admitting the maps.}
\label{sec:3}
We will present new examples of stable fold maps by applying ATSS operations to fundamental fold maps such as some special generic maps and investigate homology groups and cohomology rings of the resulting Reeb spaces.

Such studies were also demonstrated in \cite{kitazawa5} and \cite{kitazawa6}. However, obtained examples are simple fold maps. We obtain stable fold maps which may not be simple and calculate the cohomology rings of their Reeb spaces. We compare some of the new results to some of the known results in these articles.

We first obtain examples as Proposition \ref{prop:3}. 
We introduce fundamental terminologies. A homology class of a topological space (regarded as a polyhedron) is said to {\it be represented} by another comoact polyhedron (or as a specific case a manifold regarded as a polyhedron in a natural way) if the class is represented by a cycle obtained in a canonical way from the latter polyhedron. 
Throughout this section, for an integer $i \geq 0$, the {\it $i$-th module} of a graded commutative algebra $A$ over a commutative ring $R$ is the module consisting of all elements of degree $i$ and the $0$-th module is assumed to be $R$ forgetting the ring structure where the action by $R$ is defined in a canonical way. 

\begin{Def}
For two graded commutative algebras $A_1$ and $A_2$ over a commutative ring $R$, a graded commutative algebra $A$ over $R$ is {\it obtained by defining the $0$-th modules in a canonical way from the direct sum} if it is a graded commutative algebra over $R$ satisfying the following properties.
\begin{enumerate}
\item For an integer $i>0$, the $i$-th module is the direct sum of the $i$-th module of $A_1$ and the $i$-th module of $A_2$.
\item For a pair $(a_{i_j,1},a_{i_j,2}) \in A_1 \oplus A_2$ of elements of degree $i_j>0$ for $j=1,2$, they are defined as elements of degree $i_1$ and $i_2$, respectively, and the product is $(a_{i_1,1}a_{i_2,1},a_{i_1,2}a_{i_2,2}) \in A_1 \oplus A_2$ and an element of degree $i_1+i_2$.
\item For $r \in R$, which is an element of degree $0$, and a pair $(a_{i,1},a_{i,2}) \in A_1 \oplus A_2$ of elements of degree $i>0$, they are defined as elements of degree $0$ and degree $i$, respectively, and the product is $(ra_{i,1},ra_{i_2}) \in A_1 \oplus A_2$ and an element of degree $i$.
\end{enumerate}
\end{Def}
\begin{Prop}
\label{prop:3}
Let $R$ be a PID having a unique identity element $1 \in R$ satisfying $1 \neq 0 \in R$. Let $k \in R$ be represented as $k_0 \in \mathbb{Z}$ times the identity element $1$.
Let $f:M \rightarrow N$ be a stable fold map on an $m$-dimensional closed and connected manifold into an $n$-dimensional connected and open manifold satisfying $m-n>1$. We also assume that $n$ is divisible by $4$.
In this situation, by an ATSS operation to $f$, we have a new fold map $f^{\prime}$ satisfying the following properties.
\begin{enumerate}
\item $H_i(W_{f^{\prime}};R)$ is isomorphic to $H_i(W_f;R)$ for $i \neq \frac{n}{2}$ and $H_i(W_f;R) \oplus R \oplus R$ for $i=\frac{n}{2},n$.
\item The cohomology ring $H^{\ast}(W_{f^{\prime}};R)$ is isomorphic to and identified with a graded commutative algebra obtained by defining the $0$-th modules in a canonical way from the direct sum of the cohomology ring $H^{\ast}(W_f;R)$ and a graded commutative algebra $A_R$ over $R$. We denote the $i$-th module by $A_{R,i}$. $A_{R,i}$ is zero
 for $i \neq 0,\frac{n}{2},n$ and isomorphic to $R \oplus R$ for $i =\frac{n}{2},n$. 
 We can find a generator $\{{\nu}_{R,\frac{n}{2},1},{\nu}_{R,\frac{n}{2},2}\}$ of $A_{R,\frac{n}{2}}$ and a generator $\{{\nu}_{R,n,1},{\nu}_{R,n,2}\}$ of $A_{R,n}$
 satisfying the following properties where we consider products as explained in the definition of an algebra obtained by defining the $0$-th modules in a canonical way from a direct sum of two graded commutative algebras. 
 .
\begin{enumerate}
\item The square of ${\nu}_{R,\frac{n}{2},i}$ vanishes {\rm (}$i=1,2${\rm )}.
\item The product ${\nu}_{R,n,1}{\nu}_{R,n,2}$ and $k{\nu}_{R,n,1}+k{\nu}_{R,n,2}$ coincide.
\end{enumerate}
\end{enumerate}
\end{Prop}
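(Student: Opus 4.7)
The plan is to realize $f'$ by an ATSS operation whose generating normal system consists of two transversally immersed standard spheres of dimension $n/2$ meeting at a single point. First I would choose a small open set $U \subset P$ on which $f \mid_{f^{-1}(U)}$ is a trivial $S^{m-n}$-bundle (possible since $P$ is in the regular value set and $n$-dimensional). Inside $U$, identified with an open subset of $\mathbb{R}^n$, I place two standard spheres $S_1, S_2$ of dimension $n/2$ with linear tubular neighborhoods $N(S_j)$, arranged so that their images meet transversally in exactly one point. Because $\dim S_1 + \dim S_2 = n$, transversality in an $n$-manifold cuts out a finite set of crossings, and a perturbation inside $U$ produces exactly one. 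The axioms of Definition \ref{def:3} and the ATSS conditions of Definition \ref{def:6} are then checked directly for this configuration, with the local model at the crossing supplied by the local canonical fold map around a crossing constructed earlier in the section.

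Next I would compute $H_\ast(W_{f'};R)$ using the containment $W_f \subset W_{f'}$ from Corollary \ref{cor:1}. Write $W_{f'} = W_f \cup V$, where $V$ is a regular neighborhood of the new stratum created by the bubbling over $c_1(N(S_1)) \cup c_2(N(S_2))$, and $W_f \cap V$ deformation retracts onto the images of $\partial N(S_1) \cup \partial N(S_2)$ in $W_f$. For a single ATSS operation with one generating sphere $S^{n/2}$ and no crossings, the new branch of the Reeb space over $N(S_j)$ contributes an $R$ in degree $n/2$ (represented by $S_j$) and an $R$ in degree $n$ (represented by the new $n$-dimensional stratum of $W_{f'}$ over $N(S_j)$), while leaving all other $H_i$ unchanged. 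Doing two disjoint bubblings gives $R \oplus R$ in each of degrees $n/2$ and $n$; I would then argue via Mayer--Vietoris that a single transverse crossing in the middle of $U$ modifies the gluing but does not change the total rank at these two degrees, because the crossing contributes a cell structure compatible with the additive decomposition stated in (1).

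For the ring structure I identify $\nu_{R,n/2,i}$ with the Alexander--Lefschetz dual of the $n/2$-dimensional sphere $c_i(S_i)$ regarded as a cycle in $W_{f'}$, and $\nu_{R,n,i}$ with the cocycle dual to the new $n$-stratum branched over $N(S_i)$. The vanishing of $\nu_{R,n/2,i}^2$ (here $n/2$ is even, because $4 \mid n$) follows from the existence of a disjoint parallel copy of $S_i$ inside the new stratum, so the self-intersection class is null-homologous in $W_{f'}$. The mixed product relation (b) is to be read as saying that the cup pairing between the two generators in degree $n/2$ (and the degree-$n$ avatars thereof) produces the combination $k\nu_{R,n,1}+k\nu_{R,n,2}$ prescribed by the integer $k_0$, where $k$ enters the construction as the local attaching degree of the gluing in the canonical local fold model around the crossing point; this is the place where the assumption $4 \mid n$ (giving an even-dimensional sphere factor so an integer degree is well defined) and the freedom to choose $k_0 \in \mathbb{Z}$ are used.

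The main obstacle I anticipate is Step 3 in its ring-theoretic form: making precise the appearance of the specific coefficient $k$ in the product relation and verifying that the construction indeed realizes every prescribed $k_0 \in \mathbb{Z}$. The geometric picture at the crossing (Figures \ref{fig:1} and \ref{fig:2}) shows a locally four-sheeted preimage region produced by gluing two local canonical fold models, and the attaching data at this gluing can be varied by a smooth isotopy parameter that induces a degree-$k_0$ twist; translating that twist into the cup-product formula $\nu_{R,n,1}\nu_{R,n,2} = k\nu_{R,n,1}+k\nu_{R,n,2}$ requires bookkeeping of the intersection form on the CW model of $W_{f'}$ derived from the ATSS operation. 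The additive statement (1) and the vanishing of squares in (2)(a) are, by comparison, formal consequences of the Mayer--Vietoris computation and the self-disjoint parallelizability of each generating sphere.
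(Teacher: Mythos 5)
There is a genuine gap, and it sits exactly where the content of the proposition lies. Your generating configuration --- two immersed $\frac{n}{2}$-spheres in a small ball $U$ meeting transversally in exactly one point --- cannot exist: two closed transverse submanifolds of complementary dimension in an open ball have mod-$2$ intersection number zero, so the number of crossing points is necessarily even. The paper's proof instead chooses the two immersions ${c_1}{\mid}_{S_1}$ and ${c_2}{\mid}_{S_2}$ so that they have exactly $k_0$ \emph{pairs} of crossings, i.e.\ $2k_0$ crossing points, and this count is where the coefficient $k$ comes from: the polyhedron $A$ attached to $W_f$ is obtained from $S_1\times S^{\frac{n}{2}}\sqcup S_2\times S^{\frac{n}{2}}$ by identifying $k_0$ pairs of product discs, and evaluating ${\nu}_{R,\frac{n}{2},1}{\nu}_{R,\frac{n}{2},2}$ on the two $n$-dimensional classes of $A$ amounts to counting these identifications, each pair contributing $0$ or $\pm 1$ according to which hemisphere ${D^{\frac{n}{2}}}_1$ or ${D^{\frac{n}{2}}}_2$ the attached disc is made to agree with. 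Your alternative mechanism --- fixing a single crossing and extracting $k_0$ from a ``degree-$k_0$ twist'' in the attaching data of the local canonical fold map --- is not available: that local model is a fixed product-type map with no such integer parameter, and a single transverse crossing could only ever contribute $\pm 1$ to the cup product, not an arbitrary $k_0$. You flag this step yourself as the main obstacle; it is not a bookkeeping issue but the point at which your construction must be replaced by the paper's choice of immersions with prescribed intersection pattern.

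The additive part of your argument (a Mayer--Vietoris decomposition of $W_{f^{\prime}}$ along the preimage of the tubular neighborhoods, with each generating sphere contributing $R$ in degrees $\frac{n}{2}$ and $n$) and the vanishing of the squares via disjoint parallel copies are consistent with the paper's exact-sequence computation and would survive the correction, once the decomposition is taken with respect to the polyhedron $A$ built from the $2k_0$ crossings rather than from a single one.
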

\begin{proof}
In the proof, we abuse notation in section \ref{sec:2}: especially notation in Definition \ref{def:3} and around this. Set $l=1$ in Definition \ref{def:3}(--\ref{def:6}). Let us find a suitable normal system of submanifolds compatible with $f$ and denote this by $\{(S_1,N(S_1),c_1:N(S_1) \rightarrow P),(S_2,N(S_2),c_2:N(S_2) \rightarrow P)\}$. We take this so that $S_1$ and $S_2$ are standard spheres of dimension $\frac{n}{2}$.
We take an open disc $U$ in the regular value set sufficiently close to the intersection of the image of a connected component of the singular set consisting of singular points of index $0$ and the complement of the corner in the boundary of the image of the map: the image $f(M)$ is regarded as a manifold which may have corners and smoothly embedded in the target manifold. Singular points of index $0$ exist and we can take $U$ as this. In fact, $M$ is closed, $N$ is not compact (closed) and thus there exists a singular point of index $0$. 

Let us assume $k_0 \geq 0$.

We can take $\{(S_1,N(S_1),c_1:N(S_1) \rightarrow P),(S_2,N(S_2),c_1:N(S_2) \rightarrow P)\}$ so that the family $\{c_1,c_2\}$ has exactly $k_0 \geq 0$ pairs of crossings ($2k_0$ crossings), that the normal bundles of the immersions are trivial and that the relation $c_1(N(S_1)) \bigcup c_2(N(S_2))  \subset U$ holds.
We can demonstrate an ATSS operation whose generating normal system is $c_1(N(S_1)) \bigcup c_2(N(S_2))$ to obtain a new fold map $f^{\prime}$. We use a local canonical fold map around a crossing in section \ref{sec:2} around each crossing of the family $\{c_1(S_1),c_2(S_2)\}$: around the remaining singular values and regular values, we construct product maps of Morse functions with exactly one singular point, which is of index $1$, and identity maps on ($n-1$)-dimensional manifolds and trivial $S^{m-n}$-bundles over $n$-dimensional manifolds and last glue all the local maps together. We can demonstrate this construction thanks to the assumption that $U$ is an open ball in the regular value set sufficiently close to the image of a connected component of the singular set consisting of singular points of index $0$.

By the definition of a normal bubbling operation, $W_{f^{\prime}}$ is regarded as a space obtained by attaching a polyhedron $A$ we can obtain by identifying exactly $k_0$-pairs of disjointly embedded PL discs of dimension $n$ of the disjoint union of smooth (PL) manifolds $S_1 \times S^{\frac{n}{2}}$ and $S_2 \times S^{\frac{n}{2}}$ represented by products of two PL discs of dimension $\frac{n}{2}$ to $B:={\bar{f}}^{-1}(c_1(N(S_1)) \bigcup c_2(N(S_2))) \subset W_f$. More precisely, the union of $S_1 \times D^{\frac{n}{2}} \subset S_1 \times S^{\frac{n}{2}}$ and $S_2 \times D^{\frac{n}{2}} \subset S_2 \times S^{\frac{n}{2}}$ is attached
to where $D^{\frac{n}{2}}$ denotes a hemisphere of dimension $\frac{n}{2}$.
Let $i_{B,f}$ and $i_{B,A}$ denote the canonical inclusions of $B$ into $W_f$ and $A$, respectively. We have the following exact sequence.

$\begin{CD}
@>   >> H_i(B;R) @> {i_{B,f}}_{\ast} \oplus {i_{B,A}}_{\ast} >> H_i(W_f;R) \oplus H_i(A;R)
\end{CD}$

$\begin{CD}
@>   >>  H_i(W_{f^{\prime}};R) @>    >> H_{i-1}(B;R) 
\end{CD}$

$\begin{CD}
@>  {i_{B,f}}_{\ast} \oplus {i_{B,A}}_{\ast}  >> H_{i-1}(W_f;R) \oplus H_{i-1}(A;R) @>   >>
\end{CD}$.

By observing the definition of a normal bubbling operation and the topologies of these spaces, the image of ${i_{B,f}}_{\ast}$ is zero except for the case $i=0$ and ${i_{B,A}}_{\ast}$ is injective. 
$H_i(W_{f^{\prime}};R)$ is isomorphic to $H_i(W_f;R)$ for $i \neq \frac{n}{2}$ and $H_i(W_f;R) \oplus R \oplus R$ for $i=\frac{n}{2},n$: we set suitable identifications. In the case $i= \frac{n}{2}$, the summands $R$ are seen to be generated by the class represented by $\{{\ast}_j\} \times S^{\frac{n}{2}}$ in the original manifolds $S_j \times S^{\frac{n}{2}}$ to obtain $A$ where ${\ast}_j$ is a suitable point in $S_j$ for $j=1,2$. In the case $i=n$, the summands $R$ are seen to be generated by the classes represented by two $n$-dimensional polyhedra in $A$. Remember that $A$ is an $n$-dimensional polyhedron obtained from the original $n$-dimensional closed, connected and orientable manifolds $S_j \times S^{\frac{n}{2}}$.

We discuss the cohomology rings. By the construction, the resulting cohomology ring is represented as an algebra obtained by defining the $0$-th modules in a canonical way from the direct sum of that of $W_f$ and a new graded commutative algebra $A_R$: we denote the $i$-th module of $A_R$ by $A_{R,i}$. This is zero except $i=\frac{n}{2},n$. In these two cases $i=\frac{n}{2},n$, the modules are isomorphic to $R \oplus R$. We investigate the product of two elements of degree $\frac{n}{2}$. We can see that the classes are regarded as classes represented by the original manifolds $S_j \times S^{\frac{n}{2}}$ (considering the situation they are deformed and attached to $W_f$). 

We discuss the case where the number $k_0 \geq 0$ of the pairs of crossings is $1$. In this case, the square of each element in a suitable generator $\{{\nu}_{R,\frac{n}{2},1},{\nu}_{R,\frac{n}{2},2}\}$ of the module $A_{R,\frac{n}{2}}$ vanishes and ${\nu}_{R,\frac{n}{2},1}{\nu}_{R,\frac{n}{2},2}$ can be an element of a suitable generator $\{{\nu}_{R,n,1},{\nu}_{R,n,2}\}$ of $A_{R,n}$ and also $0$ and we explain the reason. 

First we explain the classes in ${\nu}_{R,\frac{n}{2},1}{\nu}_{R,\frac{n}{2},2}$ and $\{{\nu}_{R,n,1},{\nu}_{R,n,2}\}$. We can define a cohomology class regarded as the dual of the homology class represented by $\{{\ast}_1\} \times S^{\frac{n}{2}}$ ($\{{\ast}_2\} \times S^{\frac{n}{2}}$) before: the value at this class is the identity element $1 \in R$ and the values at classes in $H_{\frac{n}{2}}(W_f;R) \oplus \{0\} \subset H_{\frac{n}{2}}(W_f;R) \oplus A_R$ and at the class represented by $\{{\ast}_2\} \times S^{\frac{n}{2}}$ (resp. $\{{\ast}_1\} \times S^{\frac{n}{2}}$) are zero. We define ${\nu}_{R,\frac{n}{2},1}$ and ${\nu}_{R,\frac{n}{2},2}$ as these classes.

We evaluate the value of the product ${\nu}_{R,\frac{n}{2},1}{\nu}_{R,\frac{n}{2},2}$ at the classes represented by $n$-dimensional polyhedra in $A$ and generating the summands $R$. More precisely, the classes are obtained after the original manifolds $S_j \times S^{\frac{n}{2}}$ are deformed and attached to $W_f$. Similarly, we define ${\nu}_{R,n,1}$ and ${\nu}_{R,n,2}$ as the duals of these classes. 

For the original manifold $S_2 \times S^{\frac{n}{2}}$, the class represented by $S_2 \times \{{\ast}^{\prime}\} \subset S_2 \times S^{\frac{n}{2}}$ can be mapped to the class represented by $\{{\ast}_1\} \times S^{\frac{n}{2}} \subset S_1 \times S^{\frac{n}{2}}$ in the original manifold deformed and attached to $W_f$ and regarded as a subspace in $W_{f^{\prime}}$ where ${\ast}^{\prime}$ is a point. Moreover, if we treat this sphere as an embedding, then via a suitable homotopy, we can change this to a PL homeomorphism onto a sphere regarded as the preimage ${\bar{f}}^{-1}(c_1(F))$ of the image $c_1(F)$ of a fiber $F$ of the bundle $N(S_1)$ by the map $\bar{f}:W_f \rightarrow N$. The class represented by $S_2 \times \{{\ast}^{\prime}\} \subset S_2 \times S^{\frac{n}{2}}$ can be also mapped to the zero class if we demonstrate a normal bubbling operation in another suitable way. 

We explain on the class represented by $S_2 \times \{{\ast}^{\prime}\} \subset S_2 \times S^{\frac{n}{2}}$ in the resulting Reeb space $W_{f^{\prime}}$. We consider one point in the pair $(p_1,p_2)$ of the crossings of the family $\{{c_1} {\mid}_{S_1},{c_2} {\mid}_{S_2}\}$ of the immersions: take $p_2$. We can take $D_{1}$, $D_{2}$, $D_{3}$ and $D_{4}$ as in Definition \ref{def:3}. The key ingredient is observing the topologies around $p_2$ and ${\bar{f}}^{-1}(p_2)$. We can consider another normal bubbling operation to $f$ to exchange the types of the topology around $p_2$ and ${\bar{f}}^{-1}(p_2)$ without changing other parts including the structure around $p_1$ and ${\bar{f}}^{-1}(p_1)$ from the original $f^{\prime}$. We explain this.
This figure represents the topology of $W_{f^{\prime}}$ seen from directly above and a case for the $2$-dimensional case and this also accounts for a case for general $n$. $D_j \times \{{\ast}^{\prime \prime}\} \subset S_1 \times S^{\frac{n}{2}}$ ($j=1,3$) and $D_j \times \{{\ast}^{\prime}\} \subset S_2 \times S^{\frac{n}{2}}$ ($j=2.4$) denote the subpolyhedra (discs) attached to the original Reeb space $W_f$ where ${\ast}^{\prime \prime}$ is a suitable point. Dots denote $S_1 \times \{{\ast}^{\prime \prime}\} \subset S_1 \times S^{\frac{n}{2}}$ and $S_2 \times \{{\ast}^{\prime}\} \subset S_2 \times S^{\frac{n}{2}}$, after they are deformed and attached to the Reeb space. We represent the original manifold $S_1 \times S^{\frac{n}{2}}$ by $S_1 \times ({D^{\frac{n}{2}}}_1 \bigcup  {D^{\frac{n}{2}}}_2)$ where ${D^{\frac{n}{2}}}_1$ and ${D^{\frac{n}{2}}}_2$ are hemispheres whose boundaries are an equator of $S^{\frac{n}{2}}$. We can demonstrate an ATSS operation so that $D_2 \times {\{\ast\}}^{\prime \prime}$ and $\{q_1\} \times {D^{\frac{n}{2}}}_1 \subset D_1 \times S^{\frac{n}{2}}$ agree in $W_{f^{\prime}}$ for a suitable point $q_1$. We can demonstrate an ATSS operation so that $D_4 \times \{{\ast}^{\prime \prime}\}$ and $\{q_2\} \times {D^{\frac{n}{2}}}_1 \subset D_3 \times S^{\frac{n}{2}}$ agree and also demonstrate an ATSS operation so that $D_4 \times \{{\ast}^{\prime \prime}\}$ and $\{q_2\} \times {D^{\frac{n}{2}}}_2 \subset D_3 \times S^{\frac{n}{2}}$ agree in $W_{f^{\prime}}$ for a suitable point $q_2$.

\begin{figure}
\includegraphics[width=30mm]{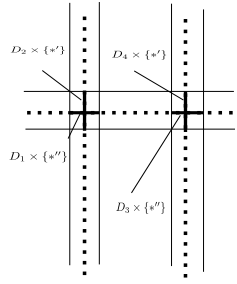}
\caption{The topology of $W_{f^{\prime}}$ seen from directly above. $D_{j_1} \times \{{\ast}^{\prime}\},D_{j_1} \times \{{\ast}^{\prime \prime}\} \subset S_{j_2} \times S^{\frac{n}{2}}$ denote the subpolyhedra (discs), after they are deformed and attached to the riginal Reeb space $W_f$, and dots are for $S_1 \times \{{\ast}^{\prime \prime}\} \subset S_1 \times S^{\frac{n}{2}}$ and $S_2 \times \{{\ast}^{\prime}\} \subset S_2 \times S^{\frac{n}{2}}$, after they are deformed and attached to the Reeb space.}
\label{fig:3}

\end{figure}
This yields the fact that the value of the product ${\nu}_{R,\frac{n}{2},1}{\nu}_{R,\frac{n}{2},2}$ before at the class represented by the two $n$-dimensional polyhedra in $A$ or the classes obtained after the original manifolds $S_j \times S^{\frac{n}{2}}$ are deformed and attached to $W_f$ can be $k{\nu}_{R,n,1}+k{\nu}_{R,n,2}$. The product can be also zero if we demonstrate the operation in a suitable way.

For a general $k_0 \geq 0$ and also for $k_0 <0$, we can argue similarly. For the case where the number $k_0$ is $0$, see also the proofs of some propositions and theorems of \cite{kitazawa6}. Note also that we can demonstrate the procedure such that the number of the pairs of the crossings is larger than $k_0 \geq 0$ ($|k_0| \geq 0$) and that in this case we can argue similarly.

This completes the proof.
\end{proof}

The following is obtained by applying Proposition \ref{prop:3} inductively and rigorous proofs are left to readers.

\begin{Thm}
\label{thm:1}
Let $R$ be a PID having a unique identity element $1 \in R$ satisfying $1 \neq 0 \in R$. Let $k_1>0$ and $k_2 \geq 0$ be integers and $\{k_{1,j}\}_{j=1}^{k_1} \subset R$ be a sequence of elements of $R$ such that $k_{1,j}$ is represented as $k_{0,1,j} \in \mathbb{Z}$ times the identity element $1$.
Let $f:M \rightarrow N$ be a stable fold map on an $m$-dimensional closed and connected manifold $M$ into an $n$-dimensional connected and open manifold $N$ satisfying $m-n>1$. We also assume that $n$ is divisible by $4$.

In this situation, by an ATSS operations to $f$, we have a new fold map $f^{\prime}$ satisfying the following properties.
\begin{enumerate}
\item $H_i(W_{f^{\prime}};R)$ is isomorphic to $H_i(W_f;R)$ for $i \neq \frac{n}{2}$ and $H_i(W_f;R) \oplus R^{2k_1+k_2}$ for $i=\frac{n}{2},n$.
\item The cohomology ring $H^{\ast}(W_{f^{\prime}};R)$ is isomorphic to and identified with an algebra obtained by defining the $0$-th modules in a canonical way from the direct sum of the cohomology ring $H^{\ast}(W_f;R)$ and a graded commutative algebra $A_R$ over $R$. We denote the $i$-th module of $A_R$ by $A_{R,i}$. $A_{R,i}$ is zero
 for $i \neq 0,\frac{n}{2},n$ and isomorphic to $R$ for $i=0$ and $R^{2k_1+k_2}$ for $i =\frac{n}{2},n$. If we take a suitable generator $\{a_j\}_{j=1}^{2k_1+k_2}$ of $\{0\} \oplus A_{R,\frac{n}{2}} \subset H^{\frac{n}{2}}(W_f;R) \oplus A_{R,\frac{n}{2}}$ and a suitable generator $\{b_j\}_{j=1}^{2k_1+k_2}$ of $\{0\} \oplus A_{R,n} \subset H^{n}(W_f;R) \oplus A_{R,n}$, then for products
  as explained in the definition of an algebra obtained by defining the $0$-th modules in a canonical way from a direct sum of two graded commutative algebras the following properties hold. 
\begin{enumerate}
\item The product of $a_{2j-1}$ and $a_{2j}$ is $k_{1,j}b_{2j-1}+k_{1,j}b_{2j}$ for any integer satisfying $1 \leq j \leq k_1$.
\item The product of $a_{j_1}$ and $a_{j_2}$ vanishes for any pair of integers satisfying $1 \leq j_1,j_2 \leq 2k_1$ except cases $(j_1,j_2)=(2j-1,2j)$ for any integer satisfying $1 \leq j \leq k_1$.
\end{enumerate}
\end{enumerate}
\end{Thm}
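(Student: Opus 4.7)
The plan is to induct on $k_1$, obtaining the paired generators by iterating Proposition \ref{prop:3}, and then to append $k_2$ further unpaired generators by ATSS operations with a single generating sphere each. The construction is essentially a disjoint-support version of the earlier proposition.

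I would first choose a family of pairwise disjoint open discs $\{U_j\}_{j=1}^{k_1+k_2}$ in the regular value set of $f$, each taken close to the image of a connected component of the singular set consisting of index-$0$ singular points. Such discs exist in abundance because $M$ is closed and $N$ is open, so index-$0$ singular values are present and accessible, exactly as in the proof of Proposition \ref{prop:3}. The disjointness of the supports is the key geometric fact: it guarantees that the successive ATSS operations do not interfere, and that the new subpolyhedra attached to the Reeb space at different stages live over disjoint open sets in $N$.

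Next, proceed by induction on $k_1$. At stage $j$ (with $1\le j\le k_1$), apply Proposition \ref{prop:3} to $f_{j-1}$ inside $U_j$, with parameter $k=k_{1,j}$. This produces a stable fold map $f_j$ whose Reeb space gains, relative to $W_{f_{j-1}}$, two $R$-summands in $H_{n/2}$ generated by $a_{2j-1},a_{2j}$ and two in $H_n$ generated by $b_{2j-1},b_{2j}$, together with the multiplicative relations $a_{2j-1}^2=a_{2j}^2=0$ and $a_{2j-1}a_{2j}=k_{1,j}(b_{2j-1}+b_{2j})$. Because the new generators at stage $j$ are carried by cycles supported above $U_j$, while the generators from earlier stages are supported above $U_{j'}$ with $j'<j$, the cup products across distinct stages vanish at the chain level, and the cohomology algebra assembles as the direct sum (over the zeroth modules) of the pieces produced at each stage. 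After $k_1$ such steps, the paired part of the claimed structure is in place. To obtain the remaining $k_2$ generators, perform $k_2$ further ATSS operations inside $U_{k_1+1},\ldots,U_{k_1+k_2}$, each with a single generating standard sphere of dimension $n/2$ (that is, the case $l=1$ of Definitions \ref{def:3}--\ref{def:6}); this is the simpler construction already recorded in \cite{kitazawa5} and \cite{kitazawa6}, and each such operation attaches a copy of $S^{n/2}\times S^{n/2}$ to the Reeb space, contributing one generator $a_{2k_1+j}$ to $A_{R,n/2}$ and one generator $b_{2k_1+j}$ to $A_{R,n}$, with no product relation introduced to previously produced generators by disjointness.

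The principal obstacle, and the point that requires real care, is verifying that the direct-sum cohomology structure genuinely survives the iteration. Concretely, at each stage one needs a Mayer--Vietoris argument analogous to the displayed exact sequence in the proof of Proposition \ref{prop:3}, showing that ${i_{B,f}}_{\ast}$ vanishes in positive degrees so that no hidden relations among old and new classes appear, and that no cross-terms appear in the cup product between generators coming from different $U_j$'s. Both points follow from the local nature of each ATSS operation and the fact that each attached polyhedron meets the previous Reeb space only over a neighborhood of regular values contractible within the piece of $W_{f_{j-1}}$ to which it is attached; but keeping the bookkeeping clean through all $k_1+k_2$ steps is precisely what the authors have relegated to the reader.
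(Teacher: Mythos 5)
Your proposal is correct and follows essentially the same route as the paper: iterate Proposition \ref{prop:3} $k_1$ times with disjointly supported generating systems to produce the paired generators, then adjoin the $k_2$ unpaired generators via ATSS operations whose generating manifolds are disjointly embedded $\frac{n}{2}$-spheres (the paper does this last step as a single ATSS with $k_2$ spheres, which is immaterial). Your added emphasis on the disjointness of the supports and the vanishing of ${i_{B,f}}_{\ast}$ in positive degrees is exactly the bookkeeping the paper leaves to the reader.
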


As a more precise explanation, we demonstrate a procedure for $k_1$ times in Proposition \ref{prop:3} and demonstrate an ATSS such that the generating normal system is a disjoint union of $k_2$ embedded standard  spheres of dimension $\frac{n}{2}$ (the immersion is an embedding and we can demonstrate this procedure easily). We  present another theorem. Before this, we introduce the following proposition, which is as a fundamental proposition shown in \cite{kitazawa5} and \cite{kitazawa6}. We can show this by applying a method as in the proof of Proposition \ref{prop:3} which is simpler. We apply this in the last step of the proof of Theorem \ref{thm:2} later and through the argument we can also know an important ingredient of the proof of this proposition.

\begin{Prop}
\label{prop:4}
Let $R$ be a PID having a unique identity element $1 \in R$ satisfying $1 \neq 0 \in R$. Let $k \in R$ be represented as $k_0 \in \mathbb{Z}$ times the identity element $1$.
Let $f:M \rightarrow N$ be a stable fold map on an $m$-dimensional closed and connected manifold into an $n$-dimensional manifold with no boundary satisfying $m-n>1$. Let $N$ be not closed. We also assume that $n$ is divisible by $4$.
In this situation, by an ATSS operation to $f$, we have a new fold map $f^{\prime}$ satisfying the following properties.
\begin{enumerate}
\item $H_i(W_{f^{\prime}};R)$ is isomorphic to $H_i(W_f;R)$ for $i \neq \frac{n}{2}$ and $H_i(W_f;R) \oplus R$ for $i=\frac{n}{2},n$.
\item The cohomology ring $H^{\ast}(W_{f^{\prime}};R)$ is isomorphic to and identified with a graded commutative algebra obtained by defining the $0$-th modules in a canonical way from the direct sum of the cohomology ring $H^{\ast}(W_f;R)$ and a graded commutative algebra $A_R$ over $R$. We denote the $i$-th module by $A_{R,i}$. $A_{R,i}$ is zero for $i \neq 0,\frac{n}{2},n$ and isomorphic to $R$ for $i =\frac{n}{2},n$. 
\end{enumerate}
\end{Prop}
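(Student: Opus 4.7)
The plan is to specialize the argument of Proposition \ref{prop:3} to the case $l=1$, where only one generating sphere appears, so that the family of immersions has no crossings and the attached polyhedron is a genuine manifold rather than a quotient of two manifolds. I would first choose a normal system $(S_1,N(S_1),c_1\colon N(S_1)\to P)$ compatible with $f$ in which $S_1$ is a standard sphere of dimension $n/2$, $N(S_1)\cong S^{n/2}\times D^{n/2}$ is a trivial $n/2$-disc bundle, and $c_1$ is an embedding whose image lies in an open ball $U\subset P$ chosen near a boundary singular value of index $0$; such an index-$0$ singular value exists because $M$ is closed and $N$ is not closed, by the same argument used at the start of the proof of Proposition \ref{prop:3}. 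Since $U\subset P$ is contractible and $\bar f|_P$ maps it homeomorphically to an open disc in the regular value set, the inclusion $c_1(S_1)\hookrightarrow W_f$ represents $0$ in $H_{n/2}(W_f;R)$. Performing the ATSS operation with this data, using the local canonical fold map of Section~\ref{sec:2} on a neighborhood of $c_1(N(S_1))$ and trivial $S^{m-n}$-bundle maps over the rest of $U$, produces the new stable fold map $f'$.

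Next I would model $W_{f'}$ as a pushout. Following the construction implicit in the proof of Proposition \ref{prop:3}, simplified because there are neither crossings nor disc identifications, $W_{f'}$ is obtained from $W_f$ by attaching $A:=S_1\times S^{n/2}\cong S^{n/2}\times S^{n/2}$ along $B:=S_1\times D^{n/2}\cong S^{n/2}\times D^{n/2}$, with $D^{n/2}\subset S^{n/2}$ a hemisphere and with $B$ identified with the subspace $\bar f^{-1}(c_1(N(S_1)))\subset W_f$. Given this pushout, I would run the Mayer--Vietoris sequence for $W_{f'}=W_f\cup_B A$ with coefficients in $R$. Using $B\simeq S^{n/2}$, $H_\ast(A;R)=H_\ast(S^{n/2}\times S^{n/2};R)$, and the fact that $n\geq 4$ (so the nonzero degrees of $B$ and $A$ do not overlap spuriously with other degrees of $W_f$), the sequence degenerates outside $\{n/2,n\}$ to give $H_i(W_{f'};R)\cong H_i(W_f;R)$. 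In degree $n$ one gets an extra summand $R=H_n(A;R)$ because $H_n(B;R)=H_{n-1}(B;R)=0$. In degree $n/2$, the map $H_{n/2}(B;R)\to H_{n/2}(W_f;R)\oplus H_{n/2}(A;R)$ is zero into the first summand (by contractibility of $U$ in $W_f$) and injects onto one of the two $R$ summands of $H_{n/2}(A;R)\cong R^{2}$ (since $B\hookrightarrow A$ realizes the class $[S^{n/2}\times\mathrm{pt}]$). Mayer--Vietoris then yields $H_{n/2}(W_{f'};R)\cong H_{n/2}(W_f;R)\oplus R$, as required.

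For the cohomology ring, dualizing the above yields a splitting $H^{\ast}(W_{f'};R)\cong H^{\ast}(W_f;R)\oplus A_R$ of graded $R$-modules, where $A_R$ is supported only in degrees $0,n/2,n$: $A_{R,n/2}$ is generated by the class dual to the surviving homology generator $[\mathrm{pt}\times S^{n/2}]\in H_{n/2}(A;R)$ (the other generator $[S^{n/2}\times\mathrm{pt}]$ being killed by the gluing to $B$), and $A_{R,n}$ by the class dual to the fundamental class of $A$. Cup products that mix $H^{\ast}(W_f;R)$ and $A_R$ vanish because cocycles representing $W_f$-classes can be chosen to be supported off the attached $A$, and the remaining cup products endow $A_R$ with a graded commutative algebra structure, giving the ``algebra obtained by defining the $0$-th modules in a canonical way from the direct sum'' form in the statement. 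The main obstacle, and the step I expect to have to handle most carefully, is the rigorous verification of the pushout model $W_{f'}\cong W_f\cup_B A$ together with the identification of the attaching data: this requires unpacking the local canonical fold map around $c_1(N(S_1))$ from Section~\ref{sec:2} to show that above $c_1(N(S_1))$ the new Reeb space chunk is homeomorphic to $S^{n/2}\times S^{n/2}$ glued along the hemisphere slab $S^{n/2}\times D^{n/2}$, which uses the triviality of both the $S^{m-n}$-bundle over $U$ and of the normal bundle $N(S_1)$, together with the fact that all new singular points have index $1$. Once this model is in hand, the Mayer--Vietoris computation and the ring-theoretic identification are largely mechanical.
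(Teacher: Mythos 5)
Your proposal is correct and follows essentially the same route the paper intends: the paper gives no separate argument for Proposition \ref{prop:4}, stating only that it follows by a simpler instance of the method of Proposition \ref{prop:3}, and your specialization to a single embedded generating sphere---realizing $W_{f^{\prime}}$ as $W_f$ with $A \cong S^{\frac{n}{2}} \times S^{\frac{n}{2}}$ attached along $B \cong S^{\frac{n}{2}} \times D^{\frac{n}{2}}$ inside a contractible region near an index-$0$ fold, followed by the Mayer--Vietoris computation and the vanishing of mixed cup products---is exactly that specialization. The only point deserving the extra care you already flag is the verification of the pushout model for the new piece of the Reeb space, which is the same ingredient the paper relies on in the proof of Proposition \ref{prop:3}.
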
  
\begin{Thm}
\label{thm:2}
Let $R$ be a PID having a unique identity element $1 \in R$ satisfying $1 \neq 0 \in R$. Let $k_1,k_2>0$ and $k_3 \geq 0$ be integers and $\{k_{1,j}\}_{j=1}^{2k_1k_2} \subset R$, $\{k_{2,j}\}_{j=1}^{k_2} \subset R$ and $\{k_{3,j}\}_{j=1}^{k_1k_3} \subset R$ be sequences of elements of $R$ such that $k_{i,j}$ is represented as $k_{0,i,j} \in \mathbb{Z}$ times the identity element $1$.
Let $f:M \rightarrow N$ be a stable fold map on an $m$-dimensional closed and connected manifold into an $n$-dimensional connected  and open manifold satisfying $m-n>1$. We also assume that $n$ is divisible by $4$.
In this situation, we can change $f$ in the following way into a stable fold map $f_0$ into $N$.
First, we remove the interior of manifold $D$ represented as a disjoint union of $k_1$ copies of a product of a standard closed disc of dimension $\frac{n}{2}+1$ and ($\frac{n}{2}-1$)-dimensional standard sphere smoothly embedded in the regular value set sufficiently close to the intersection of the image of a connected component of the singular set consisting of singular points of index $0$ and the complement of the corner in the boundary of the image of the map: the image $f(M)$ is regarded as a manifold which may have corners and smoothly embedded in the target manifold. After that we attach a product map of a height function on a unit disc of dimension $m-n+1$ and the identity map on $\partial D$ instead by diffeomorphisms between the boundaries so that the resulting singular value set $f_0(S(f_0))$ is the disjoint union of the original one and an embedded manifold diffeomorphic to $\partial D$.
Moreover, by an ATSS operations to $f_0$, we have a new fold map $f^{\prime}$ satisfying the following properties.
\begin{enumerate}
\item $H_i(W_{f^{\prime}};R)$ is isomorphic to $H_i(W_f;R)$ for $i \neq \frac{n}{2}$, $H_i(W_f;R) \oplus R^{k_1} \oplus R^{2k_2} \oplus R^{k_3}$ for $i=\frac{n}{2}$ and $H_n(W_f;R) \oplus R^{2k_2} \oplus R^{k_3}$ for $i=n$.
\item The cohomology ring $H^{\ast}(W_{f^{\prime}};R)$ is isomorphic to and identified with an algebra obtained by defining the $0$-th modules in a canonical way from the direct sum of the cohomology ring $H^{\ast}(W_f;R)$ and a graded commutative algebra $A_R$ over $R$. We denote the $i$-th module by $A_{R,i}$ and this is zero for $i \neq 0,\frac{n}{2},n$, isomorphic to $R$ for $i=0$, isomorphic to $R^{k_1} \oplus R^{2k_2} \oplus R^{k_3}$ for $i =\frac{n}{2}$ and isomorphic to $R^{2k_2} \oplus R^{k_3}$ for $i=n$.
 If we take a suitable generator $\{a_{1,j}\}_{j=1}^{k_1} \sqcup \{a_{2,j}\}_{j=1}^{2k_2} \sqcup \{a_{3,j}\}_{j=1}^{k_3}$ of $\{0\} \oplus A_{R,\frac{n}{2}} \subset H^{\frac{n}{2}}(W_f;R) \oplus A_{R,\frac{n}{2}}$ and a suitable
  generator $\{b_j\}_{j=1}^{2k_2} \sqcup \{c_j\}_{j=1}^{k_3}$ of $\{0\} \oplus A_{R,n} \subset H^{n}(W_f;R) \oplus A_{R,n}$, then for products of the form $a_{j_1,{j_1}^{\prime}}a_{j_2{,j_2}^{\prime}}$ the following properties hold where products are as explained in the definition of an algebra obtained by defining the $0$-th modules in a canonical way from a direct sum of two graded commutative algebras.
\begin{enumerate}
\item The product of $a_{2,2j-1}$ and $a_{2,2j}$ is $k_{2,j}b_{2j-1}+k_{2,j}b_{2j}$ for any integer satisfying $1 \leq j \leq k_2$.
\item The product of $a_{1,j_1}$ and $a_{2,j_2}$ is $k_{1,2(j_1-1)k_2+j_2}b_{j_2}$ for any pair of integers satisfying $1 \leq j_1 \leq k_1$ and $1 \leq j_2 \leq 2k_2$.
\item The product of $a_{1,j_1}$ and $a_{3,j_2}$ is $k_{3,(j_1-1)k_3+j_2}c_{j_2}$ for any pair of integers satisfying $1 \leq j_1 \leq k_1$ and $1 \leq j_2 \leq k_3$.
\item Products which do not have one of the forms presented above vanish. 
\end{enumerate}
\end{enumerate}
\end{Thm}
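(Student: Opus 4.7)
To prove Theorem~\ref{thm:2} I would proceed in two geometric stages followed by a homological computation, combining the techniques of Proposition~\ref{prop:3}, Proposition~\ref{prop:4}, and Theorem~\ref{thm:1}. Stage A is the explicit surgery $f \rightsquigarrow f_0$: remove $k_1$ disjoint copies of $D^{\frac{n}{2}+1} \times S^{\frac{n}{2}-1}$ from the regular value set near an index-$0$ component of the singular value set, and attach the product of a height function on $D^{m-n+1}$ with $\mathrm{id}_{\partial D}$, giving $k_1$ new index-$0$ singular value components each diffeomorphic to $S^{\frac{n}{2}} \times S^{\frac{n}{2}-1}$. In the Reeb space this replaces a portion over $\mathrm{Int}(D)$ (homotopy type $S^{\frac{n}{2}-1}$) by a cylinder $I \times \partial D$ (homotopy type $S^{\frac{n}{2}} \times S^{\frac{n}{2}-1}$). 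A Mayer--Vietoris comparison shows the net new classes per bubble lie in degrees $\frac{n}{2}$ and $n-1$; in particular there is one free rank-one summand in $H_{\frac{n}{2}}(W_{f_0};R)$ per bubble, generated by a sphere $S^{\frac{n}{2}} \times \{*\}$ in $\partial D$, and no new class in $H_n$ (since $H_n(S^{\frac{n}{2}} \times S^{\frac{n}{2}-1}) = 0$). These $\frac{n}{2}$-spheres will be dual to the generators $a_{1,j}$ and account for the $R^{k_1}$ summand of $A_{R,\frac{n}{2}}$.

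Stage B is a single extended ATSS operation to $f_0$ in the sense of Definition~\ref{def:6}. Inside an open ball $U$ in the regular value set of $f_0$, chosen near an index-$0$ component and large enough to encompass the Stage-A bubbles, I would construct a normal system of submanifolds comprising: (i) $k_2$ pairs of immersed standard $\frac{n}{2}$-spheres, with the $j$-th pair meeting in exactly $k_{0,2,j}$ pairs of crossings as in the proof of Proposition~\ref{prop:3} (dual to $a_{2,2j-1},a_{2,2j}$); (ii) $k_3$ pairwise disjoint embedded standard $\frac{n}{2}$-spheres (dual to $a_{3,j}$); and (iii) in disjoint subballs of $U$ indexed by $(j_1,j_2)$, exactly $k_{0,1,2(j_1-1)k_2+j_2}$ (respectively $k_{0,3,(j_1-1)k_3+j_2}$) pairs of crossings between the $j_1$-th bubble sphere of Stage~A and the $j_2$-th $a_2$-sphere (respectively $a_3$-sphere). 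All the relevant spheres have dimension $\frac{n}{2}$ in the $n$-dimensional ambient, so general position yields finite transverse intersections realising any prescribed non-negative crossing multiplicities, and Proposition~\ref{prop:1} ensures these fit into a valid ATSS configuration.

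For Stage C (homology and cup products) I would apply the Mayer--Vietoris argument used in the proof of Proposition~\ref{prop:3}, iterated over the local pieces attached to $W_{f_0}$ by the ATSS. Each Proposition~\ref{prop:3}-style pair contributes $R \oplus R$ to $H_{\frac{n}{2}}$ and $H_n$ via the deformed attachments $S_j \times S^{\frac{n}{2}}$; the $k_3$ embedded $a_3$-spheres contribute $R^{k_3}$ in each degree, exactly as in $k_3$ iterations of Proposition~\ref{prop:4}; and Stage A has already given $R^{k_1}$ in $H_{\frac{n}{2}}$ with trivial contribution to $H_n$. For the products, the pair products $a_{2,2j-1}a_{2,2j} = k_{2,j}(b_{2j-1}+b_{2j})$ are exactly those computed in Proposition~\ref{prop:3}. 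The mixed products $a_{1,j_1}a_{2,j_2}$ and $a_{1,j_1}a_{3,j_2}$ are evaluated against the $b_{j_2}$- and $c_{j_2}$-dual $n$-dimensional polyhedral cycles by localizing to a neighborhood of each prescribed crossing; the local model is the same as in Proposition~\ref{prop:3} (see FIGURE~\ref{fig:3}), with one crossing sphere now living in a Stage-A bubble cap rather than an ATSS immersion, and each crossing contributes $1$ to the pairing, so the total contributions are $k_{1,\cdot}b_{j_2}$ and $k_{3,\cdot}c_{j_2}$ as required. All products not listed vanish because the relevant $\frac{n}{2}$-cycles can be separated in $W_{f'}$: distinct $a_1$-spheres lie in distinct Stage-A caps, distinct $a_2$-pairs sit in disjoint subballs of $U$, and the $a_3$-spheres are chosen disjoint from everything except the prescribed crossings with $a_1$-spheres.

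The main obstacle is verifying the mixed-product contribution: showing that a crossing between a Stage-A bubble sphere and a Stage-B ATSS-immersed sphere contributes exactly one copy of the corresponding $b$- or $c$-class to the cup product, with the correct identification of attaching polyhedral cap in $W_{f'}$. This is a careful local adaptation of the FIGURE~\ref{fig:3} analysis in the proof of Proposition~\ref{prop:3}, the only new feature being that one of the two crossing spheres now arises from a Stage-A cap rather than an ATSS immersion; the local Reeb-space picture still factors through a product $D^{\frac{n}{2}} \times D^{\frac{n}{2}}$ in $U$, so the same local model applies. Once this local picture is in hand, the combinatorial bookkeeping (ensuring $a_{1,j_1}a_{2,j_2}$ sees only $b_{j_2}$ with coefficient $k_{1,2(j_1-1)k_2+j_2}$, and analogously for the $a_3$ products) is guaranteed by the earlier choice of disjoint subballs indexed by $(j_1,j_2)$.
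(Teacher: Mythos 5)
Your Stage A, your Stage C bookkeeping, and your treatment of the products $a_{2,2j-1}a_{2,2j}$ all match the paper's proof, but the mechanism you propose for the mixed products $a_{1,j_1}a_{2,j_2}$ and $a_{1,j_1}a_{3,j_2}$ is not the one the paper uses, and as stated it does not fit the framework you are working in. You realize the coefficients $k_{1,\cdot}$ and $k_{3,\cdot}$ by ``prescribing crossings between the $j_1$-th bubble sphere of Stage A and the $j_2$-th $a_2$- (or $a_3$-) sphere.'' But the Stage-A sphere $S^{\frac{n}{2}}\times\{\ast\}\subset\partial D$ lies in the new singular value set of $f_0$ (inside the component diffeomorphic to $S^{\frac{n}{2}}\times S^{\frac{n}{2}-1}$), whereas Definitions \ref{def:3}--\ref{def:6} only define crossings among the generating manifolds of a normal system, all of which must be immersed in a connected component $P$ of the regular-value part of the Reeb space, away from $q_{f_0}(S(f_0))$. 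A ``crossing'' between a Stage-A cap and a Stage-B generating sphere is therefore not an ATSS datum, the local canonical fold map around a crossing does not apply to it, and the FIGURE \ref{fig:3} analysis cannot be invoked for it; this is exactly the point you yourself flag as ``the main obstacle,'' and it does not go through as a local adaptation. (Also, Proposition \ref{prop:1} concerns Reeb spaces being polyhedra and has no bearing on whether a configuration is a valid ATSS.)

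What the paper does instead is encode the mixed-product coefficients homologically rather than by crossings. The $2k_2$ immersed spheres and the $k_3$ embedded spheres are all placed in the regular value set sufficiently close to the new singular-value component $\partial D\cong S^{\frac{n}{2}-1}\times S^{\frac{n}{2}}$, and the $j$-th sphere is chosen so that its fundamental class, pushed into $H_{\frac{n}{2}}(f_0(M_0);R)$, equals the appropriate sum of the $k_{1,j'}$ (resp.\ $k_{3,j'}$) times the generator represented by a sphere parallel to the $\frac{n}{2}$-sphere factor of the new singular-value component. The only crossings that occur are the $2|k_{2,j}|$ crossings within the $j$-th pair of $a_2$-spheres; the $a_3$-spheres are embedded with pairwise disjoint images and have no crossings at all, contrary to your item (iii). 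The products $a_{1,j_1}a_{2,j_2}$ and $a_{1,j_1}a_{3,j_2}$ are then read off from these homology classes (together with Propositions \ref{prop:3} and \ref{prop:4} applied iteratively), not from intersections with a bubble sphere. To repair your argument you would need to replace the crossing prescription in item (iii) of your Stage B by this homological positioning condition.
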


We present a proof for the case $k_1=1$. More rigorous explanations are needed on the cohomology rings in the proof and they are explained in the proofs of propositions and theorems in \cite{kitazawa6}. 
Moreover, we can show this for a general $k_1$ and these proofs are left to readers.
\begin{proof}[A proof for the case $k_1=1$.]
We can see that $H_i(W_{f_0};R)$ is isomorphic to $H_i(W_f;R)$ for $i \neq \frac{n}{2}$, $H_i(W_f;R) \oplus R^{2k_2}$ for $i=\frac{n}{2}$ and $H_n(W_f;R)$ for $i=n$. The cohomology ring is isomorphic to an algebra obtained by identifying the $0$-th modules in a canonical way in the direct sum of $H^{\ast}(W_f;R)$ and an algebra $A_{0,R}$: we denote the $i$-th module by $A_{0,R,i}$ and this is zero
 for $i \neq 0,\frac{n}{2}$ and isomorphic to $R$ for $i=0,\frac{n}{2}$. We explain about an ATSS operation to $f_0$.

We can choose $2k_2$ immersions of $S^{\frac{n}{2}}$ into the regular value set sufficiently close to the new connected component of the singular value set $f_0(S(f_0))$. We can take the immersions so that for any integer $1 \leq j \leq k_2$ the ($2j-1$)-th and $2j$-th immersions satisfy the following properties and that for any pair $(j_1,j_2)$ of integers satisfying $1 \leq j_1<j_2 \leq 2k_2$ and $(j_1,j_2) \neq (2j-1,2j)$ for any integer $1 \leq j \leq k_2$ the images of the $j_1$-th and $j_2$-th immersions are disjoint.
\begin{enumerate}
\item At fundamental classes of the domain of the immersions into the (interior of the) image $f_0(M_0)$, which is regarded as a manifold which may have corners and smoothly embedded in the target manifold, the values of the homomorphisms induced by the inclusions are $k_{1.2j-1}={\Sigma}_{j^{\prime}=(2j-2)k_1+1}^{(2j-1)k_1} k_{1,j^{\prime}}$ times a generator and $k_{1,2j}={\Sigma}_{j^{\prime}=(2j-1)k_1+1}^{2jk_1} k_{1,j^{\prime}}$ times a generator of the class represented by a sphere parallel to an $\frac{n}{2}$-dimensional sphere in the new connected component in the resulting singular value set $f_0(S(f_0))$, respectively: we present the representations $k_{1.2j-1}={\Sigma}_{j^{\prime}=(2j-2)k_1+1}^{(2j-1)k_1} k_{1,j^{\prime}}$ and $k_{1,2j}={\Sigma}_{j^{\prime}=(2j-1)k_1+1}^{2jk_1} k_{1,j^{\prime}}$ considering the case for general $k_1$.
The generator is also a generator of the $\frac{n}{2}$-th homology group of the connected component, diffeomorphic to $S^{\frac{n}{2}-1} \times S^{\frac{n}{2}}$. This argument is based on fundamental arguments on differential topological theory of immersions and embeddings. For more rigorous discussions, see also several proofs of propositions and theorems of \cite{kitazawa6}.  
\item The number of the crossings of the family of these two immersions is $2|k_{2,j}|$ for $n>2$ and larger than or equal to this for $n=2$: this is based on fundamental properties of immersions and embeddings of curves into planes and surfaces. 
\end{enumerate}
 By applying (essential arguments in the proof of) Proposition \ref{prop:3} one after another observing the changes of the topologies of the Reeb spaces and so on, in the case $k_3=0$, we have a result. 
 For a general $k_3$, we need to demonstrate the following procedure in addition and we apply Proposition \ref{prop:4} with several arguments in (the proofs of) propositions and theorems in \cite{kitazawa6}.
\begin{enumerate}
\item We take $k_3$ embeddings of $S^{\frac{n}{2}}$ into the regular value set sufficiently close to the new connected component of the singular value set $f_0(S(f_0))$ where we consider the map before demonstrating the previous operation. We can take the embeddings so that the images are disjoint and that for any integer $1 \leq j \leq k_3$ at a fundamental class of the domain of the embedding into the (interior of the) image $f_0(M_0)$, which is regarded as a manifold which may have corners and smoothly embedded in the target manifold, the value of the homomorphism induced by the embedding is $k_{3.j}={\Sigma}_{j^{\prime}=(j-1)k_1+1}^{j k_1} k_{3,j^{\prime}}$ times the class represented by a sphere parallel to an $\frac{n}{2}$-dimensional sphere in the new connected component in the resulting singular value set $f_0(S(f_0))$: we present the representation $k_{3.j}={\Sigma}_{j^{\prime}=(j-1)k_1+1}^{j k_1} k_{3,j^{\prime}}$ considering the case for general $k_1$.
The generator is also a generator of the $\frac{n}{2}$-th homology group of the connected component, diffeomorphic to $S^{\frac{n}{2}-1} \times S^{\frac{n}{2}}$.  
\item We demonstrate an ATSS whose generating normal system is the union of the images of the embeddings just before.
\end{enumerate}
This completes the proof.

\end{proof}
%We explain about another important property in the situations of Theorems \ref{thm:1} and \ref{thm:2} (Proposition \ref{prop:3}). The square of each element of degree $\frac{n}{2}$ of an algebra playing roles $A_R$ plays must be $0$ or divisible by $2$ in known results in \cite{kitazawa5} and \cite{kitazawa6}. By taking suitable numbers $k_{2,j,0}$ in these theorems and propositions, we can obtain cases which do not satisfy this property.

%We compare Theorem \ref{thm:1} (Proposition \ref{prop:3}) to known results in \cite{kitazawa5} and \cite{kitazawa6}. In these articles, we investigated the homology groups and the cohomology rings of Reeb spaces and summands playing same roles as $A_R$ plays in Proposition \ref{prop:3} and Theorem \ref{thm:1}. One of important properties in the situations of the previous results is that we can always take non-zero elements whose degrees are $\frac{n}{2}$ having vanishing squares where $n$ is even if non-zero elements of degree $\frac{n}{2}$ exist.
%On the other hand, in the situations of Proposition \ref{prop:3} and Theorem \ref{thm:1}, this property does not hold in general.
\begin{Ex}
In the situation of Theorem \ref{thm:2}, we consider the case where $k_1+2k_2+k_3=3$. Consider the case where $k_2=0$ or $k_{2,j}=0$ for all $j$. In this case, we can easily see that there exists a submodule of rank $2$ of the $\frac{n}{2}$-th module of the PID $A_R$ consisting of elements such that the squares vanish.
Consider the case where $R=\mathbb{Z}, \mathbb{Q}$, for example, $k_1=k_2=1$, $k_{0,1,1}=1$, $k_{0,1,2}=k_{0,1,2k_1k_2}=2$ and $k_{0,2,1}=k_{0,2,k_2}=3$.
We consider the square of an element $r_1a_{1,1}+r_2a_{2,1}+r_3a_{2,2}$ where $r_1,r_2,r_3 \in R$. The square is $2r_1r_2k_{1,1}b_1+2r_1r_3k_{1,2}b_2+2r_2r_3k_{2,1}b_1+2r_2r_3k_{2,1}b_2=(2r_1r_2+6r_2r_3)b_1+(4r_1r_3+6r_2r_3)b_2$. 
We consider the case where this is zero. Suppose that $r_2$ is zero. In this case, $r_1$ or $r_3$ must be zero.  Suppose that $r_2$ is not zero. In this case, $r_1=(-3)r_3$ and as a result $r_3$ is zero or represented as $r_2=2r_3$.  
We have that there exists no submodule of rank $2$ of the $\frac{n}{2}$-th module of $A_R$ consisting of elements such that the squares vanish. Such a case does not appear in \cite{kitazawa6} while the former case is discussed there. Such cases happen generally of course.
\end{Ex}

Last we extend Proposition \ref{prop:2}
\begin{Prop}
\label{prop:5}
Proposition \ref{prop:2} holds even if we weaken the condition so that the restrictions to the singular sets may have crossings which are normal and each connected component of the preimage of each of which contains at most two singular points. 
\end{Prop}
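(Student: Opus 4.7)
The plan is to adapt the proof of Proposition \ref{prop:2} as established in \cite{saekisuzuoka} and generalized in \cite{kitazawa2} and \cite{kitazawa3}, adding a focused local analysis at the newly permitted crossings of $q_f|_{S(f)}$. The backbone of that argument is the following: one shows that every fiber of the quotient map $q_f:M \to W_f$ is $(m-n-1)$-connected in homology with arbitrary coefficient module, and then invokes a Leray-Vietoris type argument to deduce that $q_f^{\ast}$ is an isomorphism in degrees $0 \leq j \leq m-n-1$; the algebra structure of statement (2) is then tracked via the multiplicativity of the spectral sequence, and statement (3) is read off from Poincar\'e duality in the critical dimension $m=2n$.

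First, I would review the fiber analysis for the strata of $W_f$ already covered by Proposition \ref{prop:2}: over $W_f - q_f(S(f))$ the fiber is a standard sphere $S^{m-n}$ by hypothesis, and over non-crossing points of $q_f(S(f))$ the fiber is either a single point (index $0$) or the ``pinched'' model obtained from the index $1$ local form, both of which are $(m-n-1)$-connected. The new case, which is the heart of Proposition \ref{prop:5}, is a crossing point $w \in W_f$ of $q_f|_{S(f)}$: here $q_f^{-1}(w)$ is, by hypothesis, a connected component $C$ of $f^{-1}(\bar{f}(w))$ containing exactly two singular points $p_1,p_2$. Using the local normal forms of Definition \ref{def:1} at $p_1$ and $p_2$ together with the transversality condition built into the notion of a normal crossing, I would describe $C$ explicitly as the union of two local singular fibers glued along their common regular locus, and then verify directly via Mayer-Vietoris that $C$ is $(m-n-1)$-connected. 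The cap ``at most two singular points per connected component'' is crucial here: each local singular fiber is $(m-n-1)$-connected, the overlap consists of a disjoint union of standard spheres $S^{m-n}$, and gluing two such pieces preserves the connectivity bound while gluing three or more could fail.

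Once fiber connectivity is established uniformly across $W_f$, the rest of the proof of Proposition \ref{prop:2} runs essentially verbatim. The Leray spectral sequence with $E_2^{p,q} = H^p(W_f;\mathcal{H}^q(q_f^{-1}(\cdot);A))$ converging to $H^{p+q}(M;A)$ has $E_2^{p,q}=0$ for $1 \leq q \leq m-n-1$, so the edge homomorphism $q_f^{\ast}:H^j(W_f;A) \to H^j(M;A)$ is an isomorphism for $j \leq m-n-1$; the multiplicative structure of the spectral sequence together with the fact that all nonzero classes in the cut-off range sit on the base axis yields statement (2); statement (3) for $H_n$ in the case $m=2n$ follows as in \cite{saekisuzuoka} from Poincar\'e duality on $M$ applied to classes one degree above the cut-off, a step which is insensitive to whether $q_f|_{S(f)}$ is an embedding or merely has normal crossings of the type allowed here.

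I expect the main obstacle to be the honest verification that the fibers over crossing points are $(m-n-1)$-connected. This requires assembling the two local fold normal forms into a coherent global picture of the component $C$, handling the bookkeeping for whether $p_1$ and $p_2$ share the same index or not, and then running the Mayer-Vietoris argument whose overlap term, namely the regular part of $C$ near each $p_i$, is a disjoint union of standard spheres. Everything else in the argument is a straightforward adaptation of the simple-fold case treated in \cite{saekisuzuoka}, \cite{kitazawa2} and \cite{kitazawa3}.
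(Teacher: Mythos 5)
Your route is genuinely different from the paper's. The paper follows the Saeki--Suzuoka strategy of building a compact $(m+1)$-dimensional manifold $W$ in the piecewise smooth (PL) category that collapses to $W_f$ and is obtained from $M\times [0,1]$ by attaching handles of index at least $m-n$: the only new ingredient needed for Proposition \ref{prop:5} is a third local model over a crossing, namely a bundle whose fiber is ($S^{m-n+1}$ minus four discs) $\times I$, which is then filled in by a $D^{m-n+2}\times I$-bundle alongside the two local models already present in the simple case. You instead prove $(m-n-1)$-connectivity of every point preimage of $q_f$ and run a Leray/Vietoris--Begle argument. For parts (1) and (2) of Proposition \ref{prop:2} your route is viable (for the $\pi_j$ statement you need Smale's Vietoris theorem rather than the Leray spectral sequence, but the fibers are simply connected wedges of spheres when $m-n\geq 2$, so Hurewicz closes that loop), and it buys a proof that never leaves $M$ and $W_f$.

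There are, however, two concrete problems. First, your explanation of why the crossing fibers are $(m-n-1)$-connected is misattributed. A component containing two index-$1$ singular points could a priori be two $(m-n)$-spheres meeting at \emph{two} points, which is homotopy equivalent to $S^{m-n}\vee S^{m-n}\vee S^1$ and is not $(m-n-1)$-connected for $m-n\geq 2$; this configuration is excluded not by the cap ``at most two singular points per component'' but by the hypothesis that all regular fibers are disjoint unions of standard spheres (resolving such a doubly-glued configuration produces a non-sphere regular fiber). Your Mayer--Vietoris step must rule this configuration out explicitly, and the remark that ``three or more could fail'' is not the operative mechanism. Second, part (3) of Proposition \ref{prop:2} (the rank of $H_n(M;A)$ is twice that of $H_n(W_f;A)$ when $m=2n$) does not follow from your spectral sequence plus ``Poincar\'e duality on $M$'': in the critical degree $n=m-n$ one gets $H^n(M;A)\cong H^n(W_f;A)\oplus E_{\infty}^{0,n}$ with $E_{\infty}^{0,n}$ a submodule of $\Gamma(W_f;R^n{q_f}_{\ast}A)$, and identifying the rank of that term with the rank of $H^n(W_f;A)$ is exactly the duality statement that \cite{saekisuzuoka} (and \cite{kitazawa2}, \cite{kitazawa3}) extract from the bounding manifold $W$ via Lefschetz duality and the exact sequence of the pair $(W,M)$. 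Since your approach deliberately avoids constructing $W$, you cannot simply import that step ``as in \cite{saekisuzuoka}''; you either need an independent argument for the $E_{\infty}^{0,n}$ term or you need the paper's construction after all.
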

The {\it piecewise smooth category} is the category whose objects are smooth manifolds with canonically defined PL structures and whose morphisms are piecewise smooth maps between the manifolds. The category is known to be equivalent to the PL category.
\begin{proof}[The sketch of a proof]
This is essentially owing to the discussions in referred articles and \cite{saeki3}. More rigorous proofs are left to readers.

We will construct a manifold bounded by the original manifold collapsing to the Reeb space in the piecewise smooth category.

Note that for each point in the image of the map $f$ and an $n$-dimensional small standard closed disc containing this in the interior, each connected component of the preimage is one of the following types as smooth manifolds (which may have corners),
\begin{enumerate}
\item A product of an ($m-n$)-dimensional standard sphere and an $n$-dimensional standard closed disc. 
\item A product of an manifold obtained by removing the interior of the union of three disjointly and smoothly embedded standard closed discs in $S^{m-n+1}$ and an ($n-1$)-dimensional standard closed disc.   
\item A product of an manifold obtained by removing the interior of the union of four disjointly and smoothly embedded standard closed discs in $S^{m-n+1}$, a closed interval $I$ and an ($n-2$)-dimensional standard closed disc. 
\end{enumerate}

There exist three types of the topologies of small regular neighborhoods of points in the Reeb space. Each type of these types of the topologies is for each case above. All points in the Reeb space of each type form manifolds whose dimensions are $n$, $n-1$, and $n-2$, respectively. 

For each case, we can construct bundles whose fibers are as above in the piecewise linear category and we can construct bundles whose fibers are $D^{m-n+1}$, $D^{m-n+2}$, or $D^{m-n+2} \times I$ in the category whose subbundles obtained by restricting the fibers to suitable compact submanifolds of the boundaries of the discs $D^{m-n+1}$ or $D^{m-n+2}$ are the original bundles: note that the dimensions of the suitable compact submanifolds are same as those of the boundaries and that in the last case we restrict $D^{m-n+2} \times I$ to $\partial D^{m-n+2} \times I =S^{m-n+1} \times I$ first. We can locally construct these bundles and we can glue them in the piecewise smooth category (PL category).

This yields a desired ($m+1$)-dimensional compact PL manifold collapsing to $W_f$, which is an $n$-dimensional polyhedron.
The resulting ($m+1$)-dimensional manifold is regarded as a manifold obtained by attaching handles whose indices are larger than or equal to $m-n$ to $M \times \{1\} \subset M \times [0,1]$ where we discuss in the PL category.
\end{proof}
We can apply this to some explicit fold maps obtained in Theorems \ref{thm:1} and \ref{thm:2} and so on.

\section{Acknowledgement.}
The author is a member of JSPS KAKENHI Grant Number JP17H06128 "Innovative research of geometric topology and singularities of differentiable mappings"
(https://kaken.nii.ac.jp/en/grant/KAKENHI-PROJECT-17H06128/). This work is supported by this project.

\end{document}